\newtheorem{theorem}{Theorem}
\newtheorem{lemma}[theorem]{Lemma}
\newtheorem{conjecture}[theorem]{Conjecture}
\newtheorem{claim}{Claim}
\begin{document}

\title{Almost color-balanced perfect matchings \\ in color-balanced complete graphs}
\author{Johannes Pardey \and Dieter Rautenbach}

\maketitle
\vspace{-10mm}
\begin{center}
{\small Institute of Optimization and Operations Research, Ulm University,\\ 
Ulm, Germany, \texttt{$\{$johannes.pardey,dieter.rautenbach$\}$@uni-ulm.de}}
\end{center}

\begin{abstract}
For a graph $G$ 
and a not necessarily proper $k$-edge coloring $c:E(G)\to \{ 1,\ldots,k\}$,
let $m_i(G)$ be the number of edges of $G$ of color $i$,
and call $G$ {\it color-balanced} if $m_i(G)=m_j(G)$ for every two colors $i$ and $j$.
Several famous open problems relate to this notion;
Ryser's conjecture on transversals in latin squares, for instance,
is equivalent to the statement 
that every properly $n$-edge colored complete bipartite graph $K_{n,n}$
has a color-balanced perfect matching.
We contribute some results on the question posed by Kittipassorn and Sinsap 
(arXiv:2011.00862v1) 
whether every $k$-edge colored color-balanced complete graph $K_{2kn}$ 
has a color-balanced perfect matching $M$.
For a perfect matching $M$ of $K_{2kn}$, 
a natural measure for the total deviation of $M$ 
from being color-balanced is  
$f(M)=\sum\limits_{i=1}^k|m_i(M)-n|$.
While not every color-balanced complete graph $K_{2kn}$ 
has a color-balanced perfect matching $M$,
that is, a perfect matching with $f(M)=0$,
we prove the existence of a perfect matching $M$
with $f(M)=O\left(k\sqrt{kn\ln(k)}\right)$ for general $k$
and $f(M)\leq 2$ for $k=3$;
the case $k=2$ has already been studied earlier.
An attractive feature of the problem is that it naturally invites the combination
of a combinatorial approach based on counting and local exchange arguments
with probabilistic and geometric arguments.\\[3mm]
{\bf Keywords:} Zero-sum Ramsey theory; rainbow matching
\end{abstract}

\pagebreak

\section{Introduction}

As a special question from the area of zero-sum Ramsey theory \cite{ca,gage},
Caro, Hansberg, Lauri, and Zarb \cite{cahalaza} asked whether every $2$-edge-colored complete graph of order $4n$ 
with equally many edges of both colors always has a perfect matching that also contains equally many edges of both colors.
This question was answered independently and affirmatively 
by Ehard, Mohr, and Rautenbach\cite{ehmora} and 
by Kittipassorn and Sinsap \cite{kisi}.
The motivation for the present paper is an interesting problem formulated by Kittipassorn and Sinsap 
concerning possible generalizations of this result to settings with more than two colors.
In particular, they \cite{kisi} asked whether, 
for every $k$-edge-coloring of $K_{2kn}$
with equally many edges of each color,
there is a perfect matching $M$
that also contains equally many edges of each color;
in other words, whether there is a {\it color-balanced} perfect matching
for every {\it color-balanced} edge-coloring of $K_{2kn}$.
Note that the considered edge-colorings are not required to be proper,
and that an obvious necessary condition for the existence of $M$
is that the order of the complete graph is a multiple of $2k$.
While the example in Figure \ref{fig1} 
shows that there is not always such a matching,
we believe that there are always perfect matchings 
that are very close to being color-balanced. 

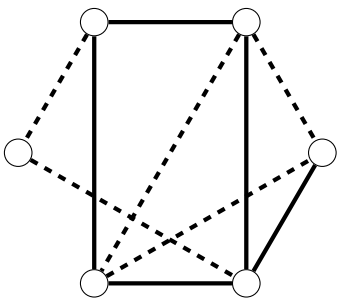
\begin{figure}[h]
    \begin{center}
      
    \begin{tikzpicture}
    \foreach \a in {1,...,6}{
				\node[draw, circle] (u\a)at ({\a*360/6}:2){};
			}
			\draw[ultra thick] (u6) -- (u5) -- (u1) -- (u2) -- (u4) -- (u5);
			
			\draw[ultra thick, dashed] (u2) -- (u3);
			\draw[ultra thick, dashed] (u1) -- (u4);
			\draw[ultra thick, dashed] (u4) -- (u6);
			\draw[ultra thick, dashed] (u3) -- (u5);
			\draw[ultra thick, dashed] (u1) -- (u6);

    \end{tikzpicture}
\caption{For the edge-coloring of the $15$ edges of $K_6$ with three colors 
such that 
the $5$ edges indicated by bold lines form one color class,
the $5$ edges indicated by dashed lines form a second color class,
and the missing $5$ edges form the third color class,
there is no perfect matching containing one edge from each color class.}
\label{fig1}  
\end{center}
\end{figure}
We propose the following conjecture.

\begin{conjecture}\label{conjecture1}
If $n$ and $k$ are positive integers, and $c:E(K_{2kn})\to [k]$ is such that,
for every $i$ in $[k]$,
there are equally many edges $e$ with $c(e)=i$,
then there is a perfect matching $M$ of $K_{2kn}$ with 
$$f(M)\leq O(k^2),$$ 
where
$f(M)=\sum\limits_{i=1}^k\Big||c^{-1}(i)\cap M|-n\Big|.$
\end{conjecture}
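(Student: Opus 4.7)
The plan is an iterative local-exchange argument. Starting from an arbitrary perfect matching $M_0$ of $K_{2kn}$, I would repeatedly apply $4$-cycle swaps: replace two matching edges $\{a,b\}, \{c,d\} \in M$ by $\{a,c\}, \{b,d\}$ (or by $\{a,d\}, \{b,c\}$), which alters at most four color counts by $\pm 1$. Using $f$ itself as potential, the goal is to prove the structural claim that whenever $f(M) > Ck^2$ for a suitable absolute constant $C$, some swap strictly decreases $f(M)$; iterating this claim then yields Conjecture~\ref{conjecture1}.

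For the structural claim, fix colors $i,j$ with $m_i(M)\ge n+1$ and $m_j(M)\le n-1$ and search for an edge $\{a,b\}\in M$ with $c(\{a,b\})=i$ and a vertex $v\notin\{a,b\}$ such that $c(\{a,v\})=j$ and $c(\{v,M(v)\}), c(\{b,M(v)\})\notin\{i,j\}$: any such configuration yields a clean $i\to j$ swap with no side effects. To count these, one double-counts over pairs $(\{a,b\},v)$: since color-balance of $K_{2kn}$ forces every vertex to have on average $(2kn-1)/k\approx 2n$ incident edges of each color, the number of pairs with $c(\{a,v\})=j$ is of order $m_i(M)\cdot n$, and in a heuristically uniform coloring only an $O(1/k)$-fraction of these violate the remaining constraints on $c(\{v,M(v)\})$ and $c(\{b,M(v)\})$. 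The conjectured $O(k^2)$ bound is then consistent with allowing each of the $\le k$ over-represented colors to carry an excess of order $k$ before a good swap is forced, and with iterating the swap at most $O(k^2)$ times in total.

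The main obstacle is that this counting is only sharp \emph{on average}: an adversarial coloring may concentrate $j$-edges onto vertex pairs whose $M$-mates carry precisely the forbidden colors $i$ or $j$, driving the good-swap count to zero even when the pairwise imbalance $m_i(M)-m_j(M)$ is already of order $\sqrt{n}$. This is essentially the source of the $O\!\left(k\sqrt{kn\ln k}\right)$ rate obtained by the authors: the standard deviation of a single color count under a uniformly random perfect matching is inherently $\Theta(\sqrt n)$, which no purely probabilistic argument can cross. Reaching a bound independent of $n$ therefore seems to require exchange structures on more than four vertices -- for instance, alternating cycles of length $O(k)$ in the two-color subgraph $c^{-1}(i)\cup c^{-1}(j)$ together with $M$ and a candidate perturbation, in analogy with augmenting paths in matching theory -- together with a careful accounting of the collateral color shifts along such cycles. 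The ad-hoc proof of $f(M)\le 2$ for $k=3$ in this paper shows that longer local augmentations do exist in small cases; lifting this to arbitrary $k$ with only polynomial loss is the principal technical difficulty I foresee.
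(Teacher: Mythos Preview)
The statement you are addressing is Conjecture~\ref{conjecture1}, and the paper contains no proof of it; it is explicitly left open, with Theorems~\ref{theoremg0}, \ref{theorem1}, and \ref{theoremg1} supplying only the weaker bounds $O(k\sqrt{kn\ln k})$, $f(M)\le 2$ for $k=3$, and $O(k^{11/4}n^{3/4}\sqrt{\ln k})$ respectively. There is therefore nothing in the paper to compare your argument against.

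Your write-up is in any case not a proof but a research outline, and you say so yourself: the heart of the matter is your ``structural claim'' that $f(M)>Ck^2$ forces a beneficial swap, and you give only a heuristic density count for it, then immediately explain why that count can be defeated by an adversarial coloring. Everything after that --- longer alternating structures, analogy with augmenting paths, lifting the $k=3$ argument --- is speculation about what a proof might need, not an argument. So as a proof attempt this has a genuine gap at the very first nontrivial step, and you have correctly identified it: the $4$-vertex swap count can vanish even when the imbalance is of order $\sqrt{n}$, and nothing in your proposal shows how to get below the $\sqrt{n}$ barrier. Until that obstacle is overcome, the conjecture remains open, consistent with the paper.
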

Two main results in the present paper are the following,
where $f(M)$ is as in Conjecture \ref{conjecture1}.

\begin{theorem}\label{theoremg0}
If $n$ and $k$ are positive integers, and $c:E(K_{2kn})\to [k]$ is such that,
for every $i$ in $[k]$,
there are equally many edges $e$ with $c(e)=i$,
then there is a perfect matching $M$ of $K_{2kn}$ with 
$$f(M)\leq 3k\sqrt{kn\ln(2k)}.$$ 
\end{theorem}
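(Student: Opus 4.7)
The plan is to choose a uniformly random perfect matching $M$ of $K_{2kn}$ and show that, with positive probability, $f(M)$ already satisfies the claimed bound. Color balance forces each color class to contain exactly $\binom{2kn}{2}/k=n(2kn-1)$ edges, and by symmetry every edge of $K_{2kn}$ belongs to $M$ with probability $1/(2kn-1)$. Setting $X_i:=|c^{-1}(i)\cap M|$, linearity of expectation therefore gives $\mathbb{E}[X_i]=n$ for each $i\in[k]$, so $\mathbb{E}[f(M)]=\sum_{i=1}^{k}\mathbb{E}\big[|X_i-n|\big]$ and the whole problem reduces to quantifying the typical deviation of each individual $X_i$ from $n$.

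The main technical step is a Gaussian concentration inequality for each $X_i$. I would expose $M$ through a uniformly random permutation $\pi$ of $V(K_{2kn})$ with $M=\big\{\{\pi(2j-1),\pi(2j)\}:j\in[kn]\big\}$ and form the Doob martingale $Z_j:=\mathbb{E}\big[X_i\,\big|\,\pi(1),\ldots,\pi(2j)\big]$ for $j=0,1,\ldots,kn$. Revealing one further matched pair changes $X_i$ directly by at most $1$, and through a short coupling of the two possible extensions by a single transposition inside the still-random tail of $\pi$, it shifts the conditional expectation by only an absolute constant. With uniformly bounded increments over $kn$ steps, Azuma's inequality yields
\[
\mathbb{P}\big(|X_i-n|>t\big)\le 2\exp\!\bigg(-\frac{t^{2}}{C\,kn}\bigg)
\]
for an explicit constant $C$.

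To finish, I would pick $t=\Theta\big(\sqrt{kn\ln(2k)}\big)$ so that the deviation probability for each individual color drops below $1/k$, after which a union bound over the $k$ colors produces a perfect matching $M$ satisfying $|X_i-n|\le t$ for every $i\in[k]$, hence $f(M)\le k\,t=O\big(k\sqrt{kn\ln(2k)}\big)$. The main obstacle I anticipate is keeping constants tight in the martingale step: the coupling argument that bounds the conditional-expectation shift must be executed sharply enough to make the leading factor equal to $3$ rather than some larger absolute number. A cleaner alternative, which avoids ad hoc coupling entirely, is to invoke the bounded-difference inequality for functions of uniformly random permutations---observing that a single transposition of positions changes $X_i$ by at most $2$---and to tune the resulting explicit constants so as to match exactly $3k\sqrt{kn\ln(2k)}$.
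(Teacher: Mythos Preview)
Your proposal is correct and follows essentially the same route as the paper: a uniformly random perfect matching, an edge-by-edge exposure martingale, a bounded-difference estimate, Azuma's inequality, and a union bound over the $k$ colors. The paper's exposure is via the greedy procedure \texttt{RPM} (match the smallest unmatched vertex to a uniformly random partner), and your worry about constants is resolved exactly as you anticipate in your final sentence: changing a single edge choice and coupling the two conditional distributions by one transposition gives a Lipschitz constant of~$2$ per step over $kn$ steps, so Azuma yields $\mathbb{P}[|X_i-n|>t]\le 2e^{-t^{2}/(8kn)}$, and $t=3\sqrt{kn\ln(2k)}$ makes this strictly less than $1/k$.
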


\begin{theorem}\label{theorem1}
If $n$ is a positive integer, and $c:E(K_{6n})\to [3]$ is such that,
for every $i$ in $[3]$,
there are equally many edges $e$ with $c(e)=i$,
then there is a perfect matching $M$ of $K_{6n}$ with 
$f(M)\leq 2.$
\end{theorem}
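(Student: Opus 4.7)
The plan is a minimal-counterexample argument combined with a local-exchange analysis. Let $M^*$ be a perfect matching of $K_{6n}$ that minimizes $f(M^*)$. Since $\sum_{i=1}^{3}(m_i(M^*)-n)=0$ and $|a|\equiv a\pmod{2}$ for every integer $a$, the quantity $f(M^*)$ is always even. Hence it suffices to derive a contradiction from the assumption $f(M^*)\geq 4$. After permuting colors, one may assume that $m_1(M^*)\geq n+1$ and $m_2(M^*)\leq n-1$.

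The elementary local move is a $K_4$-swap. For any two matching edges $e_1,e_2\in M^*$, the four endpoints span a copy of $K_4$ with three perfect matchings, including $\{e_1,e_2\}$. Replacing $\{e_1,e_2\}$ with either of the other two matchings of this $K_4$ produces another perfect matching $M'$ of $K_{6n}$, with $f(M')\geq f(M^*)$ by minimality. A direct computation, for each possible imbalance vector with $f\geq 4$ (a permutation of $(\pm 2,\mp 2,0)$, $(\pm 2,\mp 1,\mp 1)$, $(\pm 3,\mp 1,\mp 2)$, etc.) and each unordered pair $\{c(e_1),c(e_2)\}$, identifies the new color multi-sets that strictly decrease $f$; these are therefore forbidden for every matching pair.

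In the tight sub-case $(m_1,m_2,m_3)=(n+2,n-2,n)$, for example, the constraints read as follows: every pair $(e_1,e_2)$ of color-$1$ matching edges must have all four cross-edges colored in $\{1,3\}$; every pair with $c(e_1)=1,c(e_2)=2$ has at most one color-$2$ cross-edge per alternative matching; and every pair with $c(e_1)=1,c(e_2)=3$ has every color-$2$ cross-edge paired with a color-$1$ partner in the same alternative matching. Analogous restrictions arise for the other imbalance vectors, and a useful preliminary step is to show, via further exchange moves, that minimality actually pins the imbalance of $M^*$ down to (a permutation of) $(+2,-2,0)$, reducing the remaining case analysis to this one.

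The contradiction is then obtained by a double counting of color-$2$ edges. Writing $A,B,C\subseteq M^*$ for the color classes of $M^*$, the first constraint rules out color-$2$ edges inside $V(A)$; the second and third bound color-$2$ edges between $V(A)$ and its complement; and the pairing in the third constraint couples color-$1$ and color-$2$ edge counts between $V(A)$ and $V(C)$. Combining these local bounds with $|A|\geq n+1$, $|B|\leq n-1$, and the global identity $|c^{-1}(i)|=n(6n-1)$ yields a strict inequality, a contradiction. The main obstacle is arranging the counting so that the paired-structure constraint from $(1,3)$-pairs, together with the complete absence of color-$2$ inside $V(A)$, is sharp enough to violate the balance hypothesis; this likely requires careful bookkeeping and possibly a secondary tiebreak in the choice of $M^*$.
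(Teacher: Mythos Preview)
Your overall framework --- pick $M^*$ minimizing $f$, derive local constraints from $K_4$-swaps, then double count one color class against the global balance $|c^{-1}(i)|=n(6n-1)$ --- is exactly the paper's approach. The parity observation that $f$ is always even is correct and convenient.

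However, the plan has a structural gap in how it sequences the argument. You propose to first ``pin the imbalance of $M^*$ down to $(+2,-2,0)$ via further exchange moves'' and \emph{then} run the double count in that single case. This does not work for two reasons. First, exchange moves alone cannot bound $f(M^*)$ at all: every stage of the reduction (showing the middle coordinate is within $1$ of $n$, then bounding the outer coordinates) already requires a counting argument comparing the forbidden swap-patterns to $\frac{1}{3}\binom{6n}{2}$. Second, and more seriously, the three $f=4$ profiles $(n+1,n+1,n-2)$, $(n+2,n-1,n-1)$, $(n+2,n,n-2)$ are not independent. The paper eliminates them in that order, and in the final case $(n+2,n,n-2)$ many swaps land in one of the \emph{other two} profiles with $f=4$; only because those have already been excluded can one extract the needed constraints. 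Concretely, your constraint for a pair of excess-color matching edges is merely ``no deficit-color cross edges''; the paper needs and proves the stronger fact that \emph{all} such cross edges carry the excess color, and that strengthening is obtained precisely by invoking the previously handled $(n+1,n+1,n-2)$ case after a swap. Your outlined double count, using only the weaker constraints, does not close; the paper's final contradiction counts excess-color edges, using a lower bound obtained by pairing each deficit-color edge in certain regions with a parallel excess-color edge --- and establishing that pairing again relies on the earlier eliminated cases. No secondary tiebreak is needed, but the interleaving of swap constraints with repeated global counts, across all three $f=4$ cases, is essential rather than optional bookkeeping.
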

The term $f(M)$ is a natural measure 
for the total deviation of $M$ 
from being color-balanced.
It coincides with the Manhattan norm $\|v(M)\|_1$
of the following vector associated with a perfect matching $M$ of $K_{2kn}$
$$v(M)=
\begin{pmatrix}
m_1(M)-n\\
m_2(M)-n\\
\vdots\\
m_k(M)-n
\end{pmatrix}\in \mathbb{R}^{k},$$
where $m_i(M)=|c^{-1}(i)\cap M|$ is the number of edges in $M$ with color $i$
for every $i$ in $[k]$.

Let ${\cal M}$ be the set of all perfect matching of $K_{2kn}$.

Under a weaker hypothesis and 
with an approach combining geometric and probabilistic arguments, 
we obtain our third main result.

\begin{theorem}\label{theoremg1}
If $n$ and $k$ are positive integers with $k\geq 4$, 
and $c:E(K_{2kn})\to [k]$ is such that
the origin lies in the convex hull of the points in 
$\{ v(M):M\in {\cal M}\}$, 
then there is a perfect matching $M$ of $K_{2kn}$ with 
$$f(M)\leq 13k^{\frac{11}{4}}n^{\frac{3}{4}}\sqrt{\ln(2k)}.$$ 
\end{theorem}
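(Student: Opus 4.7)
The plan is to translate the convex-hull hypothesis into a fractional color-balanced matching via Carathéodory's theorem, and then to round this fractional object to a near-balanced integral matching by combining a concentration argument with a local decomposition of the matchings into swappable pieces.

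First, since every vector $v(M)$ satisfies $v(M)_1+\cdots+v(M)_k=0$ and hence lies in a $(k-1)$-dimensional affine subspace, Carathéodory's theorem combined with the hypothesis yields perfect matchings $M_1,\dots,M_r$ with $r\leq k$ together with weights $\lambda_1,\dots,\lambda_r\geq 0$ summing to one such that $\sum_{i=1}^r\lambda_i v(M_i)=0$. Equivalently, $\sum_i\lambda_i m_j(M_i)=n$ holds for every color $j\in[k]$, so the fractional convex combination of these $r$ matchings is \emph{color-balanced in expectation}. This is the geometric input that replaces the exact balance hypothesis used in Theorem~\ref{theoremg0}.

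Second, I would construct a concrete matching $M$ by randomized rounding of this fractional object. The naïve choice of sampling $I\in[r]$ with $\Pr[I=i]=\lambda_i$ and setting $M=M_I$ achieves $E[v(M)]=0$ but allows coordinate variances of order $n^2$, giving only the trivial bound $f(M)=O(kn)$. To sharpen this, I would decompose the $r$-regular multigraph $M_1\cup\cdots\cup M_r$ into many small alternating local units, for example the alternating even cycles of the pairwise symmetric differences $M_i\triangle M_j$, possibly subdivided further into bounded-length alternating segments. On each unit one makes an independent randomized choice among the locally available matchings weighted by the appropriate $\lambda_i$, so that the unconditional expectation of $v(M)$ remains the origin while no single choice can move any color coordinate by more than the unit's length. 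An Azuma--Hoeffding inequality applied to the martingale obtained by revealing these local choices one by one, together with a union bound over the $k$ color classes, then controls $f(M)$; optimizing the unit size against both the per-unit deviation and the total number of units yields the exponents $k^{11/4}n^{3/4}$, with the $\sqrt{\ln(2k)}$ factor coming from the union bound.

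The main obstacle is precisely that an alternating cycle of $M_i\triangle M_j$ can, in principle, have length $\Theta(kn)$, so a single long cycle could shift a color coordinate by $\Theta(kn)$ and destroy any concentration gain. Overcoming this requires a secondary device, either a purely combinatorial subdivision of long alternating cycles into many shorter alternating segments that still afford enough independent randomness, or a geometric argument exploiting the full $(k-1)$ directions available in the convex-combination data to cancel large contributions across several coordinates simultaneously. I expect the delicate interplay between this cycle-decomposition step and the Carathéodory-based geometric structure to be the heart of the proof, and it is this additional work — absent from the proof of Theorem~\ref{theoremg0} — that accounts for the weaker $n^{3/4}$ scaling of the bound compared to the $n^{1/2}$ scaling available under the stronger color-balanced hypothesis.
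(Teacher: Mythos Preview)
Your proposal correctly identifies all the main ingredients the paper uses: Carath\'eodory's theorem to obtain a short convex combination, decomposition of symmetric differences into alternating cycles, subdivision of long cycles into bounded-length pieces, and a concentration inequality with a union bound over the $k$ colors. Your diagnosis of the central obstacle (a single alternating cycle of length $\Theta(kn)$) and its remedy (cut it into pieces of controlled length) matches exactly what the paper does in its Lemma~\ref{lemmag2}.

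Where your plan diverges from the paper, and where it currently has a genuine gap, is in the mechanism for combining $r>2$ matchings into one. You propose to decompose the $r$-regular multigraph $M_1\cup\cdots\cup M_r$ into ``local units'' and then to make an independent $\lambda$-weighted choice on each unit. But such a choice yields a perfect matching only if every $M_i$ restricts to a perfect matching on the unit's vertex set; this forces the units to be unions of connected components of $\bigcup_i M_i$, which can be the entire vertex set. The alternative you mention, namely using cycles of the various pairwise differences $M_i\triangle M_j$, does not help either: these cycles share edges (any $M_i$-edge may lie in cycles of several $M_i\triangle M_j$), so independent choices on them do not combine to a perfect matching.

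The paper sidesteps this by working \emph{iteratively and pairwise}. Given the convex combination $\sum_{i\leq k+1}p_i v(M_i)=0$, it repeatedly replaces two matchings $M_{[i]}$ and $M_{i+1}$ by a single matching $M_{[i+1]}$ whose vector approximates the appropriate convex combination $p\,v(M_{[i]})+(1-p)\,v(M_{i+1})$. For a single pair the symmetric difference really is a disjoint union of alternating cycles, so after subdivision one has $O(\sqrt{kn})$ disjoint $M_{[i]}$-alternating pieces of size $O(\sqrt{kn})$ on which independent inclusion/exclusion (each with probability $1-p$) always yields a perfect matching; Azuma then gives a one-step error of $O\bigl(k^{7/4}n^{3/4}\sqrt{\ln(2k)}\bigr)$, and summing over $k$ steps produces the factor $k^{11/4}$. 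Once you insert this pairwise-iterative reduction, your sketch becomes essentially the paper's proof; as it stands, the simultaneous rounding you describe is not yet a workable procedure.
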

Before we proceed to the proofs, we briefly mention some related research.
An extreme example of color-balanced matchings are {\it rainbow matchings}
in which every two edges are colored differently. 
The existence of rainbow matchings has received a tremendous attention,
motivated by the famous conjectures
of Ryser \cite{ry}
and 
of Brualdi and Stein \cite{st}.
Ryser's conjecture states that, 
for every proper edge-coloring with $n$ colors  
of the complete bipartite graph $K_{n,n}$ 
where $n$ is odd, 
there is a perfect rainbow matching,
and the Brualdi-Stein conjecture states that, 
in the same setting for even $n$, there is a rainbow matching of size $n-1$.
Note that, trivially, for every proper edge-coloring of $K_{n,n}$ with $n$ colors,  
every color appears equally often.
The best known lower bound on the size of a largest rainbow matching 
in this setting is due to Hatami and Shor \cite{hash}.
Woolbright considered a similar problem for the complete graph 
instead of the complete bipartite graph, and proved that,
for every proper edge-coloring with $2n-1$ colors of $K_{2n}$,
there is a perfect rainbow matching.
Unlike in our setting, the edge-colorings in these conjectures and results are proper.
The existence of rainbow matchings has also been studied 
for non-proper edge-colorings. 
Erd\H{o}s and Spencer \cite{ersp} proved that,
for every (not necessarily proper) edge-coloring of $K_{n,n}$ 
for which every color is used on at most $\frac{n-1}{4e}$ edges,
there is a perfect rainbow matching. 
Just requiring that every available color is used at least once,
Fujita, Kaneko, Schiermeyer, and Suzuki \cite{fukascsu}
studied the existence of large rainbow matchings in edge-colored complete graphs.
Kostochka and Yancey \cite{koya} 
showed the existence of large rainbow matchings
for edge-colored graphs provided that every vertex is incident with edges of many different colors.

The following three sections contain the proofs of our three results.

\section{Proof of Theorem \ref{theoremg0}}

Throughout this section, let $n$ and $k$ be positive integers,
and let $c:E(K_{2kn})\to [k]$ be such that 
for every $i$ in $[k]$,
there are equally many edges $e$ with $c(e)=i$.
Let $V(K_{2kn})=[2kn]$. 
For $i$ in $[k]$ and a (not necessarily perfect) matching $M$ of $K_{2kn}$, 
let $m_i(M)$ be the number of edges $e$ in $M$ with $c(e)=i$.

We consider the following random procedure constructing a perfect matching of $K_{2kn}$:

\begin{algorithm}[H]
\Begin{
$M\leftarrow \emptyset$; 
$R\leftarrow [2kn]$\;
\While{$R\not=\emptyset$}
{
$x\leftarrow \min(R)$\;
$y\leftarrow$ a vertex from $R\setminus \{ x\}$ selected uniformly at random\;
$M\leftarrow M\cup \{ xy\}$; 
$R\leftarrow R\setminus \{ x,y\}$\;
}
\Return $M$\;
}
\caption{\texttt{RPM}}
\end{algorithm}

\bigskip

Let $M$ be a random matching returned by \texttt{RPM}.
Our goal is to use Azuma's inequality \cite{az} (cf. \cite{more}, p.~92)
in order to show that the random variable $m_i(M)$ is strongly concentrated.
Therefore, let $(e_1,\ldots,e_{j-1},e_j)$ and $(e_1,\ldots,e_{j-1},e'_j)$ 
be two possible sequences of the first $j$ edges $xy$ 
selected by \texttt{RPM} for some $j$. 
Let $K$ be the complete graph that arises by removing from $K_{2kn}$
the $2(j-1)$ vertices incident with the edges $e_1,\ldots,e_{j-1}$.
By the selection of $x$ within \texttt{RPM},
we have $e_j=xy$ and $e_j'=xy'$ for distinct vertices $x$, $y$, and $y'$ of $K$.
Let ${\cal M}_j$ be the set of all perfect matchings of $K$ that contain $e_j$,
and let ${\cal M}'_j$ be the set of all perfect matchings of $K$ that contain $e_j'$.
For $M_j$ in ${\cal M}_j$, let the vertex $x'$ be such that $x'y'\in M_j$, and let
$$g(M_j)=(M_j\setminus \{ xy,x'y'\})\cup \{ xy',x'y\}.$$
Clearly, $g(M_j)\in {\cal M}_j'$.
Note that, given the perfect matching $g(M_j)$ 
and knowing the fixed vertices $x$, $y$, and $y'$,
the vertex $x'$ is uniquely determined as the matching partner 
of $y'$ in $g(M_j)$,
that is, the original perfect matching $M_j$ can be reconstructed,
which means that the function $g:{\cal M}_j\to {\cal M}_j'$ is injective.
Since $|{\cal M}_j|=|{\cal M}_j'|$, it follows that $g$ is actually bijective.

Trivially,
$$\big|m_i(M)-m_i(g(M))\big|\leq 2
\mbox{ for every $M$ in ${\cal M}_j$ and every $i$ in $[k]$.}$$
A simple bijective argument implies that,
conditioned on the choice of $e_1,\ldots,e_{j-1},e_j$,
the procedure \texttt{RPM} produces each extension $M_j$ of $\{ e_j\}$
to a perfect matching of the complete graph $K$ equally likely.
For every $i$ in $[k]$, this implies
\begin{eqnarray*}
&&\Big|\mathbb{E}\Big[m_i(M)\mid (e_1,\ldots,e_{j-1},e_j)\Big]-
\mathbb{E}\Big[m_i(M)\mid (e_1,\ldots,e_{j-1},e_j')\Big]\Big|\\
&=& \left|\frac{1}{|{\cal M}_j|}\sum\limits_{M_j\in {\cal M}_j}m_i(M_j)
-\frac{1}{|{\cal M}_j'|}\sum\limits_{M_j'\in {\cal M}_j'}m_i(M_j')\right|\\
&=& \left|\frac{1}{|{\cal M}_j|}
\sum\limits_{M_j\in {\cal M}_j}\Big(m_i(M_j)-m_i(g(M_j))\Big)\right|\\
&\leq & \frac{1}{|{\cal M}_j|}
\sum\limits_{M_j\in {\cal M}_j}\Big|m_i(M_j)-m_i(g(M_j))\Big|\\
&\leq & 2.
\end{eqnarray*}
Since \texttt{RPM} returns each perfect matching of $K_{2kn}$ equally likely,
we have $\mathbb{E}[m_i(M)]=n$ (cf. Lemma \ref{lemmag1} below), and
Azuma's inequality implies
$$\mathbb{P}\big[|m_i(M)-n|>t\big]\leq 2e^{-\frac{t^2}{8kn}}
\mbox{ for every $t>0$ and every $i$ in $[k]$.}$$
For $t=3\sqrt{kn\ln(2k)}$, the right hand side of the previous inequality is less than $\frac{1}{k}$.
Now, the union bound shows that, with positive probability,
$$f(M)=\sum\limits_{i=1}^k|m_i(M)-n|\leq 3k\sqrt{kn\ln(2k)},$$
which completes the proof. $\Box$

\section{Proof of Theorem \ref{theorem1}}
\setcounter{claim}{0}

Throughout this section, let $n$ be a positive integer, 
and let $c:E(K_{6n})\to \{ {\rm red},{\rm green},{\rm blue}\}$ be such 
that there are equally many red, green, and blue edges,
that is, there are exactly $\frac{1}{3}{6n\choose 2}$ edges of each color.
For a perfect matching $M$ of $K_{6n}$, let
\begin{align*}
    r_M &= |\left\{e \in M : c(e) = \text{ red}\right\}|\\
    g_M &= |\left\{e \in M : c(e) = \text{ green}\right\}|\\
    b_M &= |\left\{e \in M : c(e) = \text{ blue}\right\}|,\mbox{ which implies}\\
    f(M) &=|r_M-n|+|g_M-n|+|b_M-n|.
\end{align*}
For two sets $N$ and $N'$ of edges of $K_{6n}$, 
let $E[N,N']$ be the set of edges $uv$ of $K_{6n}$
such that $u$ is incident with an edge in $N$
and $v$ is incident with an edge in $N'$,
and let $E[N]$ denote $E[N,N]$.

Now, let the perfect matching $M$ of $K_{6n}$ be chosen such that $f(M)$ is as small as possible.
In order to complete the proof, we need to show that $f(M)\leq 2$.
While the claims below are formulated in terms of $M$,
for our arguments it is important 
that the stated properties actually hold for all perfect matchings $M'$ of $K_{6n}$ 
with $f(M')=f(M)$.

Let 
\begin{eqnarray*}
M_r &=& \left\{ e \in M : c(e) = \text{ red} \right\},\\
M_g &=& \left\{ e \in M : c(e) = \text{ green} \right\},\\
M_b&=& \left\{ e \in M : c(e) = \text{ blue} \right\},\\
r&=& r_M=|M_r|,\,\,\,\,
g=g_M=|M_g|,\,\,\,\,\mbox{ and }b=b_M=|M_b|.
\end{eqnarray*}
By symmetry, we may assume 
$$r \geq g \geq b.$$
If $uv$ and $xy$ are two edges in $M$,
then we say that the two edges $ux$ and $vy$ are {\it parallel};
note that the two edges $uy$ and $vx$ are also parallel.
If replacing $uv$ and $xy$ within $M$ by the parallel edges $ux$ and $vy$
yields a perfect matching $M'$ with $f(M')<f(M)$, then we obtain a contradiction to the choice of $M$;
in this case, we say that $\{ uv,xy\}\to\{ ux,vy\}$ is a {\it contradicting swap}.

\begin{claim}\label{claim1}
$|n-g| \leq 1$.
\end{claim}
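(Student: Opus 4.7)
The plan is to prove the claim by contradiction, establishing $g \leq n+1$ and $g \geq n-1$ separately via symmetric swap-and-count arguments.

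For the upper bound I would suppose $g \geq n+2$; together with $r \geq g$ and $r + g + b = 3n$ this forces $r \geq n+2$ and $b \leq n-4$. The key tool will be a short case analysis of the change in $f$ produced by a swap $\{uv, xy\} \to \{ux, vy\}$, parameterized by the color multiset of the removed matching pair $\{c(uv), c(xy)\}$ and the color multiset of the inserted parallel pair $\{c(ux), c(vy)\}$. Under the present assumptions $r, g \geq n+2$ and $b \leq n-4$, this analysis will show that (i) for a matching pair of color type $\{g, g\}$, $\{g, r\}$, or $\{r, r\}$, every parallel pair containing at least one blue edge yields a contradicting swap, and (ii) for a matching pair of color type $\{g, b\}$ or $\{r, b\}$, only a $\{b, b\}$ parallel pair yields a contradicting swap. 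The minimality of $f(M)$ will therefore force: no blue edge in $(E[M_g] \cup E[M_g, M_r] \cup E[M_r]) \setminus M$, and no parallel pair between $M_g$ and $M_b$ or between $M_r$ and $M_b$ consisting of two blue edges.

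A double count then closes the upper bound. These restrictions bound the number of blue non-matching edges by $2gb + 2rb + 2b(b-1) = 2b(3n - 1)$ (at most one blue per parallel pair on a mixed $\{g, b\}$- or $\{r, b\}$-matching pair, plus at most all $2b(b-1)$ non-matching edges among the endpoints of $M_b$). On the other hand, the color-balanced hypothesis gives $\tfrac{1}{3}\binom{6n}{2} = 6n^2 - n$ blue edges in total, of which $b$ lie in $M$; the inequality $6n^2 - n - b \leq 2b(3n-1)$ simplifies to $b \geq n$, contradicting $b \leq n - 4$.

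The lower bound $g \geq n - 1$ will follow by the analogous argument with the roles of blue and red interchanged: supposing $g \leq n - 2$ gives $r \geq n + 4$ and $b \leq n - 2$, and the swap analysis will then force every non-matching edge among endpoints of $M_r$ to be red and every parallel pair between $M_r$ and $M_g$, or between $M_r$ and $M_b$, to contain at least one red edge. Counting red non-matching edges from below yields $6n^2 - n - r \geq 2r(3n - 1)$, hence $r \leq n$, contradicting $r \geq n + 4$. The main obstacle is the meticulous bookkeeping of all matching-pair / parallel-pair color combinations; each individual $f$-change is a one-line computation, but overlooking any contradicting swap could cause the structural restrictions above to collapse.
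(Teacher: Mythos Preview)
Your proposal is correct and follows essentially the same approach as the paper: the same two-case contradiction, the same swap restrictions (no blue edge in $E[M_r\cup M_g]$ and no blue parallel pair in $E[M_b,M_r\cup M_g]$ in the first case; the red analogue in the second), and the same count---your bound $b+2b(3n-1)=b(6n-1)$ is exactly the paper's $\binom{2b}{2}+\tfrac12\cdot 2b(6n-2b)$, just with the matching and non-matching edges tallied separately.
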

\begin{proof}[Proof of Claim \ref{claim1}.]
Suppose, for a contradiction, that $|n-g| \geq 2$. 

First, we assume that $g\geq n+2$.
Note that $r\geq n+2$ and $b\leq n-4$.
If $uv,xy\in M_r\cup M_g$ and $ux$ is blue, then the swap $\{ uv,xy\}\to\{ ux,vy\}$ is contradicting.
In fact, if $M'$ denotes the perfect matching generated by the considered swap,
then $r_{M'},g_{M'}\geq n$, $r_{M'}+g_{M'}\leq r+g-1$, and $n\geq b_{M'}\geq b+1$,
which implies $f(M')<f(M)$.
Hence, no edge in $E[M_r \cup M_g]$ is blue.
Similarly, if $uv\in M_b$, $xy\in M_r\cup M_g$, and $ux$ as well as $vy$ are blue, then the swap $\{ uv,xy\}\to\{ ux,vy\}$ is contradicting.
This implies that at most half the edges in $E[M_b,M_r \cup M_g]$ are blue;
more precisely, 
from every two parallel edges in $E[M_b,M_r \cup M_g]$
at most one is blue.
These two observations imply that the total number of blue edges is at most
\begin{eqnarray}\label{e1}
\binom{2b}{2} + \frac{1}{2}2b(6n-2b)
\stackrel{b<n}{<} \binom{2n}{2} + \frac{1}{2}2n(6n-2n)
= \frac{1}{3}{6n\choose 2},
\end{eqnarray}
where the inequality uses 
the strict monotonicity of the binomial coefficient as well as 
of the function $b\mapsto 2b(6n-2b)$.
This contradicts the hypothesis that exactly one third of all edges are blue.

Now, we may assume that $g\leq n-2$.
Note that $r\geq n+4$ and $b\leq n-2$.
If $uv,xy\in M_r$ and $ux$ is not red, then the swap $\{ uv,xy\}\to\{ ux,vy\}$ is contradicting.
Hence, all edges in $E[M_r]$ are red.
Furthermore, if $uv\in M_r$, $xy\in M_g\cup M_b$, and $ux$ as well as $vy$ are not red, then the swap $\{ uv,xy\}\to\{ ux,vy\}$ is contradicting.
This implies that at least half the edges in $E[M_r,M_g \cup M_b]$ are red;
more precisely, from every two parallel edges in $E[M_r,M_g \cup M_b]$
at least one is red.
These two observations imply that the total number of red edges is at least
\begin{eqnarray}\label{e2}
\binom{2r}{2} + \frac{1}{2}\cdot 2r(6n-2r) =(6n-1)r
\stackrel{r>n}{>}(6n-1)n=\frac{1}{3}{6n\choose 2}.
\end{eqnarray}
Again, this is a contradiction, which completes the proof.
\end{proof}

\begin{claim}\label{claim2}
If $|n-g|=1$, then $\{|n-r|,|n-b|\}=\{ k,k+1\}$ for some $k\leq 1$.
\end{claim}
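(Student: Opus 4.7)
Plan for Claim~\ref{claim2}. The set identity $\{|n-r|,|n-b|\}=\{k,k+1\}$ for some integer $k\geq 0$ is immediate from $r+g+b=3n$, $r\geq g\geq b$, and $|n-g|=1$: a short sign analysis on $g-n$ shows that $|r-n|$ and $|n-b|$ always differ by exactly one. What is substantive is the bound $k\leq 1$. I would argue by contradiction, assuming $k\geq 2$, and split into the two cases (A) $g=n+1$, which forces $r\geq n+2$ and $b\leq n-3$, and (B) $g=n-1$, which forces $r\geq n+3$ and $b\leq n-2$. In both cases the engine is the swap template from the proof of Claim~\ref{claim1}, but enhanced by one new ingredient: whenever the parallel swap $\{uv,xy\}\to\{ux,vy\}$ satisfies $\Delta f=0$, the resulting matching $M'$ is itself optimal and hence subject to Claim~\ref{claim1}, so any such swap that would create $|n-g_{M'}|\geq 2$ can additionally be forbidden. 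Enumerating the new pair $\{c(ux),c(vy)\}$ against the old pair $\{c(uv),c(xy)\}$ pins down the admissible colour patterns of parallel pairs in each of the six edge-sets $E[M_r]$, $E[M_r,M_g]$, $E[M_g]$, $E[M_r,M_b]$, $E[M_g,M_b]$, $E[M_b]$.

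For Case (B) the analysis closes in the same spirit as Claim~\ref{claim1}'s second case. Every parallel pair in $E[M_r]$ is forced to be $\{R,R\}$ (so all of $E[M_r]$ is red), and every parallel pair in $E[M_r,M_g]$ contains at least one red edge. For $E[M_r,M_b]$ the only borderline non-red new pair is $\{G,G\}$; however, the swap $\{R,B\}\to\{G,G\}$ satisfies $\Delta f=0$ and yields $M'$ with $g_{M'}=n+1$ and $r_{M'}=r-1\geq n+2$, which is exactly Case (A) of the present claim. Hence, after handling Case (A) first, this pair is also forbidden, and the total number of red edges is at least
\[
r + 2r(r-1) + 2rg + 2rb \;=\; r(6n-1) \;>\; n(6n-1),
\]
which exceeds the actual number of red edges, a contradiction.

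Case (A) is harder because $\{R,B\}\to\{G,G\}$ in $E[M_r,M_b]$ now has $\Delta f=+2$, so $M'$ is not optimal and Claim~\ref{claim1} cannot forbid it; consequently the analogous lower bound on the number of red edges in $E[M_r,M_b]$ fails. The strongest structural consequences one can still extract are: $E[M_r]$ is entirely red (any non-red parallel pair would create $|n-g_{M'}|\geq 2$); every parallel pair in $E[M_r,M_g]$ lies in $\{\{R,R\},\{R,G\}\}$, so the number of red edges there is at least $2rg$; in both $E[M_g]$ and $E[M_r,M_b]$ every blue edge has a red partner in its own parallel pair, so the red count is at least the blue count in each of these two regions; and no parallel pair in $E[M_g,M_b]$ is $\{B,B\}$, so the blue count there is at most $2gb$. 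Summing the identity (red)$-$(blue)$=0$ over the seven edge-sets and feeding in these bounds together with the trivial bound of $2b(b-1)$ on the blue count in $E[M_b]$ collapses to
\[
(r-b) + 2r(r-1) + 2rg \;\leq\; 2b(g+b-1).
\]
Writing $r=n+a$, $g=n+1$, $b=n-1-a$ with $a\geq 2$, a direct calculation shows that the difference of the two sides equals $(2a+1)(6n-1)$, strictly positive for every $n\geq 1$; this is the desired contradiction.

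The main obstacle is Case (A): one must carefully distinguish the forbidden pair types coming from $\Delta f<0$ from those coming from Claim~\ref{claim1} applied to $M'$, and then arrange the red-minus-blue bookkeeping so that the uncontrolled contributions from $E[M_g,M_b]$ and $E[M_b]$ are absorbed into the simple upper bound $2b(g+b-1)$ on the blue count touching $M_b$.
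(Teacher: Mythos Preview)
Your argument is correct, and it differs from the paper's in an interesting way. The paper treats the two possibilities (P1)${}={}$your Case~(B) and (P2)${}={}$your Case~(A) together: it first shows that from any optimal matching in (P1) one can swap to an optimal matching in (P2) and vice versa, then sits in (P1), swaps once to an auxiliary matching $M'$ in (P2), and combines the structural constraints of both $M$ and $M'$ to lower bound the number of blue edges in $E[M_r,M_b]\cup E[M_g]$ having red parallels, eventually forcing too many red edges. Your route instead disposes of Case~(A) entirely on its own, via the global identity $\mathrm{(red)}=\mathrm{(blue)}$: the region-by-region bounds you list yield $\mathrm{(red)}-\mathrm{(blue)}\geq (r-b)(6n-1)>0$, a clean one-line contradiction. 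Only then do you turn to Case~(B), where the now-established Case~(A) rules out the lone borderline pair $\{G,G\}$ in $E[M_r,M_b]$, after which the simple red count $r(6n-1)$ finishes. The trade-off: your bookkeeping in Case~(A) is more elegant and avoids tracking specific edges $e_r,e_b,e_g,e_g'$ and the back-and-forth swapping, while the paper's swap-between-cases technique is a motif it reuses in the later Claims~\ref{claim3}--\ref{claim5}. Two small remarks: the edge sets $E[M_r],E[M_g],E[M_b],E[M_r,M_g],E[M_r,M_b],E[M_g,M_b]$ number six, not seven; and in your opening line the integer $k$ is automatically at least $1$ (not just $\geq 0$), since $r\geq g\geq b$ forces $r>n>b$ once $|n-g|=1$.
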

\begin{proof}[Proof of Claim \ref{claim2}.]
Since $(n-g)+(n-r)+(n-b)=0$, 
the equality $|n-g|=1$ implies 
$\{|n-r|,|n-b|\}=\{ k,k+1\}$ for some positive integer $k$ at most $n$.
For a contradiction, suppose that $k\geq 2$,
which trivially implies $n\geq 2$.

There are two relevant possibilities:
\begin{itemize}
\item [(P1)] $(r,g,b)=(n+k+1,n-1,n-k)$
\item [(P2)] $(r,g,b)=(n+k,n+1,n-k-1)$
\end{itemize}
While these two possibilities require different arguments,
we actually show that we can always swap between (P1) and (P2).
This allows to exploit the observations made for both possibilities simultanoeusly.

First, we assume that $M$ realizes possibility (P1).
Similarly as in the second part of the proof of Claim \ref{claim1}, it follows that 
all edges in $E[M_r]$ are red
and at least half the edges in $E[M_r,M_g]$ are red.
If at least half the edges in $E[M_r,M_b]$ are red, then we obtain the same contradiction as in (\ref{e2}).
Hence, there are edges $e_r=uv$ in $M_r$ and $e_b=xy$ in $M_b$ such that $ux$ and $vy$ are both not red.
If one of $e_g=ux$ and $e_g'=vy$ is blue, 
then the swap $\{ uv,xy\}\to\{ ux,vy\}$ is contradicting.
Hence, both edges are green.
Note that the swap $\{ uv,xy\}\to\{ ux,vy\}$ 
yields a matching $M'$ with $f(M')=f(M)$ realizing possibility (P2).
Furthermore, a simple swapping argument implies 
that the parallel edge of every blue edge in $E[M_r,M_b]$ is red.

Next, we assume that $M$ realizes possibility (P2).
Similarly as in the first part of the proof of Claim \ref{claim1}, it follows that 
no edge in $E[M_r,M_r\cup M_g]$ is blue, and
at most half the edges in $E[M_b,M_r\cup M_g]$ are blue.
If no edge in $E[M_g]$ is blue, then we obtain the same contradiction as in (\ref{e1}).
Hence, $E[M_g]$ must contain at least one blue edge.
Furthermore, if $uv,xy\in M_g$, $ux$ is blue, and $vy$ is not red, then the swap $\{ uv,xy\}\to\{ ux,vy\}$ is contradicting.
Hence, the parallel edge of every blue edge in $E[M_g]$ is red.
Note that a swap involving a blue and a parallel red edge in $E[M_g]$ yields a matching $M'$ with $f(M')=f(M)$ realizing possibility (P1).

Hence, we may assume that $M$ realizes possibility (P1).
Let $e_r$, $e_b$, $e_g$, and $e_g'$ be as above.
The perfect matching $M'=(M\setminus \{ e_r,e_b\})\cup \{ e_g,e_g'\}$
satisfies $f(M')=f(M)$, and realizes possibility (P2).
Since $M_g\subseteq M_g'$,
and the parallel edge of every blue edge in $E[M_g']$ is red, 
the parallel edge of every blue edge in $E[M_g]$ is red.
Our goal is to lower bound 
the number of blue edges in $E[M_r,M_b]\cup E[M_g]$;
recall that the parallel of each such blue edge is red.
\begin{itemize}
\item There are no blue edges in $E[M_r]$.
\item Since there are no blue edges in $E[M'_r,M'_g]$, $M_g\subseteq M_g'$, and $|M_r\setminus M_r'|=1$,
there are at most $4g=4(n-1)$ blue edges in $E[M_r,M_g]$.
\item Since at most half the edges in $E[M'_g,M'_b]$ are blue,
more precisely, one of every two parallel edges in $E[M'_g,M'_b]$ is not blue, $M_g\subseteq M_g'$, and $|M_b\setminus M_b'|=1$,
there are at most $\frac{1}{2}4(b-1)g+4g=2(n-1)(b+1)$ blue edges in $E[M_g,M_b]$.
\item Clearly, there are at most ${2b\choose 2}$ blue edges in $E[M_b]$.
\end{itemize}
Since there are $\frac{1}{3}{6n\choose 2}$ blue edges altogether, 
there are at least 
\begin{eqnarray*}
\frac{1}{3}{6n\choose 2}
-4(n-1)
-2(n-1)(b+1)
-{2b\choose 2}
&=&6n^2-4n+2-2k^2+k(2n+1)\\
&\stackrel{2\leq k\leq n}{\geq}& 2n^2+8n-8
\end{eqnarray*}
blue edges in $E[M_r,M_b]\cup E[M_g]$.
As each of these blue edges has a red parallel edge, 
there are at least $2n^2+8n-8$ red edges in $E[M_r,M_b]\cup E[M_g]$.
Since there are ${2r\choose 2}$ red edges in $E[M_r]$
and at least half the edges in $E[M_r,M_g]$ are red,
the total number of red edges is at least 
$${2r\choose 2}
+2rg
+2n^2+8n-8
\stackrel{r\geq n+3,g=n-1}{\geq}6n^2+23n+1>\frac{1}{3}{6n\choose 2}.
$$
This contradiction completes the proof.
\end{proof}

\begin{claim}\label{claim3}
If $g=n$, then $|n-r|=|n-b|\leq 2$.
\end{claim}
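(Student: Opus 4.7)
The plan is to show $d \leq 2$ by contradiction, where $d := r - n = n - b \geq 0$ (so $f(M) = 2d$) under the WLOG ordering $r \geq g = n \geq b$. Assuming $d \geq 3$, I would derive a contradiction through structural constraints obtained from a swap analysis, closed by a short red/blue counting argument.

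First I would classify all minimizers. Any perfect matching $M'$ with $f(M') = 2d$ is itself a minimizer and hence satisfies Claim~\ref{claim1} and Claim~\ref{claim2}: by Claim~\ref{claim1}, $|n - g_{M'}| \leq 1$, and if this equals $1$, Claim~\ref{claim2} yields $f(M') \leq 4$, contradicting $d \geq 3$. Hence $g_{M'} = n$; combined with $r_{M'} + b_{M'} = 2n$ and $|r_{M'} - n| + |b_{M'} - n| = 2d$, this forces $(r_{M'}, g_{M'}, b_{M'}) \in \{(n+d, n, n-d), (n-d, n, n+d)\}$ for every minimizer.

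Next comes the most laborious step, a swap analysis. For each of the six types of unordered pairs of matching edges in $M$ (classified by the color pair in $\{rr, rg, rb, gg, gb, bb\}$), and each of the up to six color configurations of a parallel pair among the four cross edges between them, I would compute the profile of the resulting matching $M'$ and declare the configuration forbidden whenever $f(M') < 2d$ (contradicting minimality of $M$) or $f(M') = 2d$ with a profile outside the classification above. The outcome is: (i)~every parallel pair in $E[M_r]$ is $\{r,r\}$, so $E[M_r]$ is entirely red with $\binom{2r}{2}$ red edges; (ii)~every parallel pair in $E[M_r, M_g]$ is $\{r,r\}$ or $\{r,g\}$, so this region contains no blue edge and at least $2rg$ of its cross edges are red; (iii)~parallel pairs in $E[M_r, M_b]$ lie in $\{\{r,r\}, \{r,g\}, \{g,g\}, \{r,b\}\}$, so each blue cross edge is paired (within its parallel pair) with a red cross edge; (iv)~cross parallel pairs in $E[M_g]$ lie in $\{\{r,r\}, \{g,g\}, \{r,g\}, \{r,b\}\}$, with the same pairing property; (v)~parallel pairs in $E[M_g, M_b]$ avoid $\{b,b\}$, so each contains at most one blue edge; (vi)~$E[M_b]$ is unrestricted.

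Finally, the counting. Let $\alpha$ be the total number of blue edges in $E[M_r, M_b]$, in $E[M_g]$, or in $\{r,b\}$-type parallel pairs of $E[M_g, M_b]$; by (iii)--(v) each such blue edge has a distinct red partner in the same region. Combining with (i) and (ii),
$$6n^2 - n = |c^{-1}(\text{red})| \geq \binom{2r}{2} + 2rg + \alpha,$$
so $\alpha \leq 6n^2 - n - \binom{2r}{2} - 2rg$. Let $\beta$ count the remaining blue edges (those in $\{g,b\}$-type pairs of $E[M_g, M_b]$ together with all blue edges in $E[M_b]$). Then $\alpha + \beta = 6n^2 - n$ forces $\beta \geq \binom{2r}{2} + 2rg$, while (v) and the trivial bound on $E[M_b]$ give $\beta \leq 2gb + \binom{2b}{2}$. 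Hence
$$r(2r + 2g - 1) = \binom{2r}{2} + 2rg \leq 2gb + \binom{2b}{2} = b(2g + 2b - 1),$$
which rearranges to $(r - b)\bigl(2(r+g+b) - 1\bigr) \leq 0$, i.e., $2d(6n - 1) \leq 0$, a contradiction. The main obstacle is the bookkeeping for step two; the hypothesis $d \geq 3$ is used there to exclude marginal configurations (for instance, $\{r,g\}$ within $E[M_r]$ is allowed when $d = 2$, giving the minimizer profile $(n+1, n+1, n-2)$ consistent with Claim~\ref{claim2} at $k = 1$), and it is precisely this rigidity that makes the final counting inequality close.
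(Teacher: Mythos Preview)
Your proposal is correct and follows essentially the same route as the paper: classify all minimizers via Claims~\ref{claim1} and~\ref{claim2}, run a swap analysis to constrain the colour pattern in each of the six regions $E[M_r]$, $E[M_r,M_g]$, $E[M_r,M_b]$, $E[M_g]$, $E[M_g,M_b]$, $E[M_b]$, and then finish with a red/blue double count. Your final rearrangement to $(r-b)(6n-1)\le 0$ is a tidy repackaging of exactly the inequality the paper obtains when it compares $\binom{2r}{2}+2rg$ to $2gb+\binom{2b}{2}$; the paper just plugs in $r=n+k$, $g=n$, $b=n-k$ and checks the numbers directly.

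One small slip in your classification step: from Claims~\ref{claim1} and~\ref{claim2} applied to an arbitrary minimizer $M'$ you can only conclude that the \emph{median} of $\{r_{M'},g_{M'},b_{M'}\}$ equals $n$, i.e.\ that the multiset is $\{n+d,n,n-d\}$; you cannot conclude $g_{M'}=n$ specifically, since for $M'$ the ordering of the three colour counts need not be $r_{M'}\ge g_{M'}\ge b_{M'}$. Fortunately your swap analysis only ever uses the multiset condition (for $d\ge 3$ none of the profiles you declare forbidden is a permutation of $(n+d,n,n-d)$), so the argument goes through unchanged once the classification is stated that way.
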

\begin{proof}[Proof of Claim \ref{claim3}.]
Clearly, $|n-r|=|n-b|=k$ for some positive integer $k$ at most $n$,
that is, $(r,g,b)=(n+k,n,n-k)$.
For a contradiction, we suppose that $k\geq 3$:

If some edge in $E[M_r]$ is blue, 
then the swap involving this edge and its parallel edge is contradicting. 
Hence, no edge in $E[M_r]$ is blue.
If $e,e'\in E[M_r]$ are parallel such that $e$ is green and $e'$ is red,
then applying Claim \ref{claim2} to the matching that arises from $M$
by the swap involving $e$ and $e'$ implies a contradiction.
If $e,e'\in E[M_r]$ are parallel such that $e$ and $e'$ are both green,
then applying 
Claim \ref{claim1} (for $k\geq 4$)
or 
Claim \ref{claim2} (for $k=3$)
to the matching that arises from $M$
by the swap involving $e$ and $e'$ implies a contradiction.
Hence, all edges in $E[M_r]$ are red.

Similarly, invoking Claim \ref{claim2} for suitable matchings,
we obtain that 
no edge in $E[M_r,M_g]$ is blue,
at least half the edges in $E[M_r,M_g]$ are red,
and 
at most half the edges in $E[M_g,M_b]$ are blue.
This implies that the number of blue edges in $E[M_r,M_b]\cup E[M_g]$
is at least
$$
\frac{1}{3}{6n\choose 2}-\frac{1}{2}2b2g-{2b\choose 2}
=2n^2-2k^2+(6n-1)k
\stackrel{3\leq k\leq n}{\geq}
2n^2+18n-21.
$$
Again 
invoking Claim \ref{claim2} for suitable matchings,
we obtain that every blue edge in $E[M_r,M_b]$ has a parallel red edge,
and 
invoking Claim \ref{claim1} or Claim \ref{claim2} for suitable matchings,
we obtain that every blue edge in $E[M_g]$ has a parallel red edge.
This implies that there are at least $2n^2+18n-21$
red edges in $E[M_r,M_b]\cup E[M_g]$.
Now, the total number of red edges is at least
$${2r\choose 2}
+2rg
+2n^2+18n-21
=6n^2+17n-21+2k^2+(6n-1)k
\stackrel{3\leq k\leq n}{\geq}
6n^2+35n-6
>\frac{1}{3}{6n\choose 2}.
$$
This contradiction completes the proof.
\end{proof}
Claims \ref{claim1}, \ref{claim2}, and \ref{claim3} already imply $f(M)\leq 4.$
In order to complete the proof, we suppose, for a contradiction, that $f(M)=4$,
which implies
$$(r,g,b)\in \{ (n+1,n+1,n-2),(n+2,n-1,n-1),(n+2,n,n-2)\}.$$

\begin{claim}\label{claim4}
$(r,g,b)\not=(n+1,n+1,n-2)$.
\end{claim}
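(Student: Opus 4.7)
My plan is to argue by contradiction, assuming $(r,g,b)=(n+1,n+1,n-2)$, combining the local swap analysis used in Claims~1--3 with one new ingredient: swap a blue--green parallel pair inside $E[M_r]$ to reach a second minimum matching $M_1$ whose color vector $(n-1,n+2,n-1)$ is ``skewed enough'' that its own swap analysis produces much stronger structural information than direct analysis on $M$.

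First I collect the direct swap constraints on $M$: no edge in $E[M_r,M_g]$ is blue; in a parallel pair of $E[M_r]$ (resp.\ $E[M_g]$), a blue edge forces its partner to be green (resp.\ red); and in $E[M_r,M_b]$ and $E[M_g,M_b]$, no parallel pair consists of two blue edges. If neither $E[M_r]$ nor $E[M_g]$ contains a blue edge, the total blue count is at most $2rb + 2gb + \binom{2b}{2} = (n-2)(6n-1) = 6n^2-13n+2$, strictly less than the required $\frac{1}{3}\binom{6n}{2} = 6n^2-n$, a contradiction.

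Otherwise I assume without loss of generality that $E[M_r]$ contains a blue edge $ux$, whose parallel $vy$ is forced green (with $uv, xy \in M_r$), and set $M_1 = (M\setminus\{uv,xy\})\cup\{ux,vy\}$, which has color vector $(n-1,n+2,n-1)$ and is therefore also minimum. The key swap analysis now takes place inside $M_1$: for $(g,g)$-rectangles of $M_1$, each of the five non-$(\text{green},\text{green})$ options for a parallel pair drops $f$ to at most $2$, so every parallel pair inside $E[M_{g,1}]$ must be $(\text{green},\text{green})$, and the induced subgraph on $V(M_{g,1}) = V(M_g)\cup\{v,y\}$ is monochromatic green. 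In particular $E[M_g]$ contains no blue edge, and all $4(n+1)$ edges joining $\{v,y\}$ to $V(M_g)$ are green. Two analogous swap analyses in $M_1$ then show that in an $(r,g)$-rectangle of $M_1$ a red edge forces a green parallel, and in a $(g,b)$-rectangle of $M_1$ a blue edge forces a green parallel.

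Translating these constraints back to $M$, I split the rectangles of $E[M_r,M_g]$ into the $(r-2)g$ that avoid $uv,xy$ (each parallel pair contains at least one green by the $M_1$-constraint together with $M$'s no-blue rule) and the $2g$ incident to $uv$ or $xy$ (each contributes two green edges coming from the monochromatic subgraph on $V(M_{g,1})$), giving at least $2(n-1)(n+1) + 4(n+1) = 2(n+1)^2$ green edges in $E[M_r,M_g]$. Adding the $(n+1)(2n+1)$ greens in $E[M_g]$, at least one green per blue in $E[M_r]$, and at least one green per blue in $E[M_g,M_b]$, the global green total $6n^2-n$ bounds the combined blue count in $E[M_r]$ and $E[M_g,M_b]$ by $2n^2-8n-3$. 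The remaining blue edges in $E[M_r,M_b]$ and $E[M_b]$ must then total at least $4n^2+7n+3$, contradicting the direct region-wise bound $2rb + \binom{2b}{2} = (n-2)(4n-3) = 4n^2-11n+6$ for every $n\ge 1$. The most delicate step will be verifying carefully that all five non-$(\text{green},\text{green})$ parallel-pair configurations in a $(g,g)$-rectangle of $M_1$ drop $f$ below $4$, since this is what upgrades a weak parity-type constraint into the monochromatic-subgraph conclusion that powers the rest of the proof.
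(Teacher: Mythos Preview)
Your proof is correct. The argument is the colour-mirror of the paper's: where the paper swaps a blue--red parallel pair inside $E[M_g]$ to reach an auxiliary minimum matching with profile $(n+2,n-1,n-1)$ and deduces that $E[M_r']$ is monochromatic red, you swap a blue--green parallel pair inside $E[M_r]$ to reach profile $(n-1,n+2,n-1)$ and deduce that $E[M_{g,1}]$ is monochromatic green. The final double-counts are likewise dual---the paper lower-bounds the total number of red edges, you lower-bound green to squeeze blue---so the two proofs are really the same idea reflected through the red/green symmetry of the case $(n+1,n+1,n-2)$. One organisational difference worth noting: the paper invokes up to three auxiliary minimum matchings $M',M'',M'''$ to establish the needed parallel-edge constraints, whereas your single auxiliary matching $M_1$ already yields all of them (the $(g,g)$-, $(r,g)$- and $(g,b)$-rectangle analyses in $M_1$), which makes your write-up a bit more economical.
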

\begin{proof}[Proof of Claim \ref{claim4}.]
Suppose, for a contradiction, that $(r,g,b)=(n+1,n+1,n-2)$.

Simple swapping arguments imply
that there are no blue parallel edges in $E[M_b,M_r\cup M_g]$,
which implies that at most half the edges in $E[M_b,M_r\cup M_g]$ are blue.
Similarly, there are no blue edges in $E[M_r,M_g]$,
every blue edge in $E[M_r]$ has a green parallel, and 
every blue edge in $E[M_g]$ has a red parallel.
If no edge in $E[M_r]\cup E[M_g]$ is blue, 
then we obtain the same contradiction as in (\ref{e1}).
Hence, by the symmetry between red and green in this case,
we may assume that 
$uv$ and $xy$ are green edges such that 
$e_b=ux$ is blue and $vy$ is red.
The perfect matching $M'=(M\setminus \{ uv,xy\})\cup \{ e_b,vy\}$ 
satisfies
$f(M')=f(M)$,
$M_r\subseteq M_r'$, and 
$(r_{M'},g_{M'},b_{M'})=(n+2,n-1,n-1)$.
A simple swapping argument implies that all edges in $E[M_r']$ are red,
which implies that all edges in $E[M_r]$ are red.

If there are parallel edges in $E[M_r,M_b]$ that are blue and green,
then the swap involving these two edges yields a perfect matching $M''$ 
satisfying
$f(M'')=f(M)$,
$M_g\subseteq M_g''$, and 
$(r_{M''},g_{M''},b_{M''})=(n,n+2,n-2)$.
Now, a simple swapping argument implies that no edge in $E[M_g'']$ is blue,
which contradicts the existence of $e_b$.
Hence, every blue edge in $E[M_r,M_b]$ has a red parallel.

If there are parallel edges in $E[M_r,M_g]$ that are both green,
then the swap involving these two edges yields a perfect matching $M'''$ 
satisfying
$f(M''')=f(M)$,
$E[M_g]\subseteq E[M_g''']$, and 
$(r_{M'''},g_{M'''},b_{M'''})=(n,n+2,n-2)$.
Again, a simple swapping argument implies that no edge in $E[M_g''']$ is blue,
which contradicts the existence of $e_b$.
Since there are no blue edges in $E[M_r,M_g]$, 
it follows that every green edge in $E[M_r,M_g]$ has a red parallel,
and that at least half the edges in $E[M_r,M_g]$ are red.

The number of blue edges in $E[M_g]\cup E[M_r,M_b]$ is at least
$$
\frac{1}{3}{6n\choose 2}-\frac{1}{2}2b2g-{2b\choose 2}
=2n^2+10n-6.
$$
Since each of these blue edges has a red parallel, 
the total number of red edges is at least
$$
{2r\choose 2}+\frac{1}{2}2r2g+2n^2+10n-6
=6n^2+17n-3
>\frac{1}{3}{6n\choose 2},
$$
which is a contradiction.
\end{proof}

\begin{claim}\label{claim5}
$(r,g,b)\not=(n+2,n-1,n-1)$.
\end{claim}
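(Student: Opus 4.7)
The plan is to derive, via swap arguments combined with Claims~\ref{claim1}, \ref{claim2}, and \ref{claim4}, strong structural constraints on the coloring of every parallel pair of edges in $K_{6n}$, and then to obtain a contradiction by counting the total number of red edges.

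First, I would systematically enumerate all swaps $\{uv,xy\}\to\{ux,vy\}$ where $\{uv,xy\}$ lies inside one of the matchings $M_r$, $M_g$, $M_b$, or across two of them. For each of the possible colorings $(c(ux),c(vy))$ of a parallel pair, I would compute the resulting composition $(r_{M'},g_{M'},b_{M'})$, starting from $(r,g,b)=(n+2,n-1,n-1)$. A coloring is \emph{forbidden} whenever either $f(M')<4$, i.e.\ the swap is contradicting, or $(r_{M'},g_{M'},b_{M'})$ is an $f=4$ composition excluded by Claim~\ref{claim1} (when $|n-g_{M'}|\geq 2$), by Claim~\ref{claim2} (when $|n-g_{M'}|=1$ but $\{|n-r_{M'}|,|n-b_{M'}|\}$ is neither $\{0,1\}$ nor $\{1,2\}$), or by Claim~\ref{claim4} (when the composition equals $(n+1,n+1,n-2)$). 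Only $(n+2,n-1,n-1)$ itself and $(n+2,n,n-2)$, which is admissible by Claim~\ref{claim3}, will survive as possible outcomes of same-$f$ swaps.

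I expect this enumeration to yield the following coloring facts about $M$: every edge in $E[M_r]$ is red; every edge in $E[M_g]$ is green; no edge in $E[M_g,M_b]\cup E[M_b]$ is red; every parallel pair in $E[M_r,M_g]$ consists of one red and one green edge; and every parallel pair in $E[M_r,M_b]$ contains exactly one red edge. Summing the contributions, the total number of red edges equals
\[
\binom{2r}{2}+2rg+2rb \;=\; r\bigl(2r+2g+2b-1\bigr)\;=\;r(6n-1),
\]
which, since $r=n+2>n$, strictly exceeds $n(6n-1)=\tfrac{1}{3}\binom{6n}{2}$, contradicting the color-balance hypothesis.

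The main obstacle is organizational rather than conceptual: each potentially bad swap must be ruled out by exactly the right previously proved claim. The most delicate point is the swap in $E[M_r,M_b]$ whose two new edges are both green, which produces a matching of composition $(n+1,n+1,n-2)$ with $f=4$; this outcome is excluded precisely by Claim~\ref{claim4}. Without Claim~\ref{claim4}, the pair (green, green) would be permitted in $E[M_r,M_b]$, and the clean identity for the red count above would degrade to an inequality too weak to close the argument.
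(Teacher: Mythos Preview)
Your overall strategy is the paper's: show that every parallel pair in $E[M_r,M_g\cup M_b]$ contains a red edge (using Claim~\ref{claim4} for the (green,~green) case), so that the red count is at least $\binom{2r}{2}+\tfrac12\cdot 2r(6n-2r)=r(6n-1)>\tfrac13\binom{6n}{2}$. That part is fine and matches the paper exactly.

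However, several of the ``expected'' structural facts in your list are not derivable from your own enumeration rule, because swaps that land at $f(M')>4$ impose \emph{no} constraint. Concretely:
\begin{itemize}
\item ``Every edge in $E[M_g]$ is green'' fails: a swap inside $M_g$ with parallel colors (red,~green) yields $(n+3,n-2,n-1)$ with $f=6$, which is not forbidden.
\item ``No edge in $E[M_g,M_b]\cup E[M_b]$ is red'' fails for the same reason; e.g.\ a swap inside $M_b$ with parallel colors (red,~blue) yields $(n+3,n-1,n-2)$ with $f=6$.
\item ``Exactly one red'' in each parallel pair of $E[M_r,M_g]$ or $E[M_r,M_b]$ is too strong: the pair (red,~red) gives $f=6$, and in $E[M_r,M_g]$ the pair (red,~blue) gives $(n+2,n-2,n)$ with $f=4$, a permissible $(n+2,n,n-2)$-type composition. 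Only ``at least one red'' survives.
\end{itemize}
Consequently your displayed equality for the red count is false; it must be the inequality
\[
\#\{\text{red edges}\}\ \ge\ \binom{2r}{2}+2rg+2rb\ =\ r(6n-1),
\]
obtained solely from $E[M_r]$, $E[M_r,M_g]$, and $E[M_r,M_b]$ (ignoring any red edges elsewhere). With this correction the contradiction goes through, and the argument coincides with the paper's proof.
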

\begin{proof}[Proof of Claim \ref{claim5}.]
Suppose, for a contradiction, that $(r,g,b)=(n+2,n-1,n-1)$.

A simple swapping argument implies 
that all edges in $E[M_r]$ are red.

Let $e$ and $e'$ be two parallel edges in $E[M_r,M_b]$.
If $e$ and $e'$ are both blue
or one is blue and one is green,
then a simple swapping argument implies a contradiction.
If $e$ and $e'$ are both green,
and $M'$ is the perfect matching obtained from $M$
by the swap involving $e$ and $e'$,
then $f(M')=f(M)$, 
and $(r_{M'},g_{M'},b_{M'})=(n+1,n+1,n-2)$,
in which case Claim \ref{claim4} implies a contradiction.
Hence, at least one of every two parallel edges in $E[M_r,M_b]$ is red, 
which implies that at least half the edges in $E[M_r,M_b]$ are red.
By the symmetry between green and blue in this case,
it also follows that at least half the edges in $E[M_r,M_g]$ are red.

Now, the total number of red edges is at least
$$
{2r\choose 2}+\frac{1}{2}2r(6n-2r)
=6n^2 + 11n - 2
>\frac{1}{3}{6n\choose 2},
$$
which is a contradiction.
\end{proof}
We are now in a position to derive our final contradiction.

The previous claims imply $(r,g,b)=(n+2,n,n-2)$.

A simple swapping argument implies that 
no edge in $E[M_r]$ is blue.
Simple swapping arguments and Claim \ref{claim5} imply
that no edge in $E[M_r,M_g]$ is blue.
Claim \ref{claim4} implies
that at least half the edges in $E[M_r,M_g]$ are red.
Simple swapping arguments and Claim \ref{claim4} imply
that every blue edge in $E[M_r,M_b]$ has a red parallel;
in particular, at most half the edges in $E[M_r,M_b]$ are blue.
Claim \ref{claim5} implies
that at most half the edges in $E[M_g,M_b]$ are blue.
Similarly as before, 
if no edge in $E[M_g]$ is blue, 
then there are not enough blue edges in total.
Hence, we may assume that $e$ is a blue edge in $E[M_g]$.
Let $e'$ be the parallel edge of $e$.
Claim \ref{claim5} implies
that $e'$ is not green.

Suppose, for a contradiction, that $e'$ is blue.
If $M'$ is the perfect matching that arises from $M$
by the swap involving $e$ and $e'$, then $f(M')=f(M)$
and $(r_{M'},g_{M'},b_{M'})=(n+2,n-2,n)$.
Repeating the above arguments for $M'$
with blue replaced by green, 
and using $M_r=M_r'$ and $M_b\subseteq M_b'$,
it follows that 
no edge in $E[M_r]$ is green,
and that at least half the edges in $E[M_r,M_b]$ are red.
This implies the contradiction 
that the total number of red edges is at least
$$
{2r\choose 2}+\frac{1}{2}2r(6n-2r)
=6n^2 + 11n - 2
>\frac{1}{3}{6n\choose 2}.
$$
Hence $e'$ is red, or more generally,
every blue edge in $E[M_g]$ has a red parallel.

If $E[M_r]$ contains a green edge,
then either the parallel edge is red, in which case Claim \ref{claim4} implies a contradiction,
or the parallel edge is green, 
in which case the existence of the blue edge $e$ in $E[M_g]$ 
implies a contradiction.
Hence, all edges in $E[M_r]$ are red.

The number of blue edges in $E[M_r,M_b]\cup E[M_g]$ is at least
$$
\frac{1}{3}{6n\choose 2}-\frac{1}{2}2b2g-{2b\choose 2}
=2n^2+12n-10.
$$
Since each of these blue edges has a red parallel, 
the total number of red edges is at least
$$
{2r\choose 2}+\frac{1}{2}2r2g+2n^2+12n-10
=6n^2+23n-4
>\frac{1}{3}{6n\choose 2}.
$$
This final contradiction completes the proof.
$\Box$

\section{Proof of Theorem \ref{theoremg1}}

Throughout this section, let $n$ and $k$ be positive integers with $k\geq 4$, 
and let $c:E(K_{2kn})\to [k]$ be such that 
the origin lies in the convex hull of the points in 
$\{ v(M):M\in {\cal M}\}$, 
where $v(M)$ and ${\cal M}$ are defined as before Theorem \ref{theoremg1}.

Before we prove Theorem \ref{theoremg1},
we explain why its hypothesis is weaker than 
the hypothesis of Theorem \ref{theoremg0}.
In fact, this follows quickly by exploiting symmetries of complete graphs.

\begin{lemma}\label{lemmag1}
If $c$ is such that,
for every $i$ in $[k]$,
there are equally many edges $e$ with $c(e)=i$, then 
$$\sum_{M\in {\cal M}}v(M)=0,$$
in particular, the origin lies in the convex hull of the points in 
$\{ v(M):M\in {\cal M}\}$.
\end{lemma}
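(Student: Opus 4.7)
The plan is to exploit the edge-transitivity of $K_{2kn}$ in a double-counting argument. Observe that the automorphism group of $K_{2kn}$ acts transitively on its edges; consequently, there is a constant $N$ (depending only on $k$ and $n$) such that every edge $e\in E(K_{2kn})$ lies in exactly $N$ perfect matchings. Equivalently, counting pairs $(M,e)$ with $e\in M$ in two ways yields $N\binom{2kn}{2}=kn\,|\mathcal{M}|$, so $N$ can also be read off directly, but the crucial point is only that it is the same number for every edge.

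Now fix a color $i\in[k]$ and double-count the pairs $(M,e)\in\mathcal{M}\times E(K_{2kn})$ with $e\in M$ and $c(e)=i$. Summing over $M$ first gives $\sum_{M\in\mathcal{M}}m_i(M)$, while summing over edges first gives $N\cdot |c^{-1}(i)|$. Hence
\[
\sum_{M\in\mathcal{M}}m_i(M)=N\cdot|c^{-1}(i)|.
\]
By the hypothesis that each color class has the same size, the right-hand side is independent of $i$. Since $\sum_{i=1}^k m_i(M)=kn$ for every $M\in\mathcal{M}$, summing the displayed identity over $i$ gives $\sum_{M}\sum_i m_i(M)=kn|\mathcal{M}|$, and therefore $\sum_{M}m_i(M)=n|\mathcal{M}|$ for each $i$. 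This means that each coordinate of $\sum_{M\in\mathcal{M}}v(M)$ equals $\sum_M(m_i(M)-n)=0$, establishing the main identity.

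For the ``in particular'' statement, divide by $|\mathcal{M}|$: the origin equals $\frac{1}{|\mathcal{M}|}\sum_{M\in\mathcal{M}}v(M)$, which is a convex combination of the points $\{v(M):M\in\mathcal{M}\}$ with equal weights, hence lies in their convex hull. There is no real obstacle here; the only subtlety is recognizing that edge-transitivity alone, rather than any structural property of the coloring $c$, is what forces $N$ to be independent of the edge. Everything else is bookkeeping.
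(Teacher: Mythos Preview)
Your proof is correct and follows essentially the same approach as the paper: both exploit the symmetry of $K_{2kn}$ to observe that every edge lies in the same number of perfect matchings, then double-count to conclude each coordinate of $\sum_{M}v(M)$ vanishes. The only cosmetic difference is that the paper explicitly computes this common count as $q=|\mathcal{M}|/(2kn-1)$ and plugs in $|c^{-1}(i)|=\tfrac{1}{k}\binom{2kn}{2}$, whereas you avoid computing $N$ by noting that the $k$ equal quantities $N\,|c^{-1}(i)|$ sum to $kn\,|\mathcal{M}|$.
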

\begin{proof}[Proof of Lemma \ref{lemmag1}]
For a matching $M$ in ${\cal M}$ and an edge $e$ of $K_{2kn}$,
let $\mathbbm{1}_{e,M}$ be the indicator variable that is $1$ 
exactly if $e$ belongs to $M$.
Let $p=|{\cal M}|$.
By symmetry, the value 
$q=\sum\limits_{M\in {\cal M}}\mathbbm{1}_{e,M}$
is the same for every edge $e$ of $K_{2kn}$,
that is, every edge of $K_{2kn}$ is contained in exactly $q$ perfect matchings.
Furthermore, since every perfect matching contains exactly one of the $2kn-1$
edges incident with a fixed vertex, we obtain $p=(2kn-1)q$.
Now, for $i\in [k]$, the $i$th coordinate of 
$\sum\limits_{M\in {\cal M}}v(M)$ equals
\begin{eqnarray*}
\sum\limits_{M\in {\cal M}}\left(\left(\sum\limits_{e\in c^{-1}(i)}\mathbbm{1}_{e,M}\right)-n\right)&=&
\left(\sum\limits_{e\in c^{-1}(i)}\underbrace{\sum\limits_{M\in {\cal M}}\mathbbm{1}_{e,M}}_{=q}\right)-pn\\
&=& \left(\sum\limits_{e\in c^{-1}(i)}\frac{p}{2kn-1}\right)-pn\\
&=& \frac{1}{k}{2kn\choose 2}\frac{p}{2kn-1}-pn\\
&=&0,
\end{eqnarray*}
which completes the proof.
\end{proof}
Now, we collect the ingredients of the proof of Theorem \ref{theoremg1}.

\begin{lemma}\label{lemmag2}
If $M_1$ and $M_2$ are two perfect matchings of $K_{2kn}$,
then there is a perfect matching $M_2'$ of $K_{2kn}$,
and a collection ${\cal C}$ of disjoint subgraphs of $K_{2kn}$ such that 
\begin{enumerate}[(i)]
\item $M_2'=M_1\Delta E\left(\bigcup\limits_{C\in {\cal C}}C\right)$,
\item $\|v(M_2)-v(M_2')\|_1\leq 2\sqrt{kn}$, 
\item $|{\cal C}|\leq 3\sqrt{kn}$, and
\item every $C$ in ${\cal C}$ is the union of disjoint $M_1$-alternating cycles,
and satisfies $m(C)\leq 5\sqrt{kn}$.
\end{enumerate}
\end{lemma}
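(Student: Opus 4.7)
Since $M_1$ and $M_2$ are perfect matchings of $K_{2kn}$, the symmetric difference $M_1\Delta M_2$ decomposes into vertex-disjoint $M_1$-alternating cycles $D_1,\ldots,D_t$, where $D_i$ consists of $\ell_i$ edges of $M_1$ alternating with $\ell_i$ edges of $M_2$, and $\sum_i\ell_i\le kn$. My plan is to cut every sufficiently long $D_i$ into short $M_1$-alternating sub-cycles by means of non-$M_1$ ``shortcut'' edges of $K_{2kn}$, then to greedily bin-pack all resulting pieces (the short $D_i$'s together with the sub-cycles) into the clusters of ${\cal C}$, and finally to take $M_2'=M_1\Delta E\!\left(\bigcup_{C\in{\cal C}}C\right)$.

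Set $L=\lceil 2\sqrt{kn}\,\rceil$, which is at least $4$ since $k\ge 4$ and $n\ge 1$, and call $D_i$ \emph{long} when $\ell_i>L$. For each long $D_i$, partition its cyclic sequence of $M_1$-edges into $s_i=\lceil \ell_i/L\rceil$ consecutive arcs, each containing between $2$ and $L$ $M_1$-edges; this is feasible since $s_i\le \ell_i/L+1\le \ell_i/2$, so the arcs can be taken to have size at least $\lfloor \ell_i/s_i\rfloor\ge \lfloor L/2\rfloor\ge 2$. The two endpoints of any such arc are distinct and not $M_1$-matched (as the arc contains at least two $M_1$-edges), so appending the corresponding edge of $K_{2kn}$ closes the arc into an $M_1$-alternating sub-cycle with at most $L$ edges of $M_1$ and at most $L$ non-$M_1$ edges, hence with at most $2L\le 5\sqrt{kn}$ edges in total. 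Sub-cycles inside one long $D_i$ use pairwise disjoint arcs and are therefore vertex-disjoint, and sub-cycles from distinct $D_i$'s are vertex-disjoint because the $D_i$'s are.

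Call every short $D_i$ and every sub-cycle produced above a \emph{piece}. The pieces are pairwise vertex-disjoint $M_1$-alternating cycles with at most $2L\le 4\sqrt{kn}$ edges each, and the sum of their edge counts equals $2\sum_i\ell_i\le 2kn$, since in every long $D_i$ the $s_i$ new shortcut edges replace exactly the $s_i$ $M_2$-edges of $D_i$ that joined consecutive arcs. Greedily pack the pieces into bins of capacity $5\sqrt{kn}$ to obtain ${\cal C}$: every piece fits in an empty bin, so whenever a bin is closed its contents must already exceed $5\sqrt{kn}-4\sqrt{kn}=\sqrt{kn}$ edges, giving
\[
|{\cal C}|\ \le\ \frac{2kn}{\sqrt{kn}}+1\ =\ 2\sqrt{kn}+1\ \le\ 3\sqrt{kn}.
\]

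Properties (i), (iii) and (iv) are now immediate. For (ii), observe that $M_2'$ agrees with $M_2$ on $V(D_i)$ for every short $D_i$, while on each long $D_i$ it differs from $M_2$ only in that the $s_i$ between-arc $M_2$-edges have been swapped for the $s_i$ shortcut edges; hence $\|v(M_2)-v(M_2')\|_1\le 2\sum_{D_i\text{ long}}s_i\le 4\sum_i\ell_i/L\le 4kn/L\le 2\sqrt{kn}$, where $s_i\le 2\ell_i/L$ holds because $\ell_i>L$. The one delicate step is the sub-cycle construction: every arc must contain at least two $M_1$-edges, which is what guarantees that each shortcut is a genuine non-$M_1$ edge of $K_{2kn}$, and one must verify that the flip along the union of pieces has precisely the stated effect on each $V(D_i)$; the remaining estimates reduce to bookkeeping and a greedy packing bound.
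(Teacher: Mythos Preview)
Your argument is essentially the paper's own proof: decompose $M_1\Delta M_2$ into alternating cycles, chop the long ones into short $M_1$-alternating sub-cycles by replacing a few $M_2$-edges with chords of $K_{2kn}$, and then group the resulting pieces to control $|\mathcal C|$. The only cosmetic difference is that the paper chooses its cut points so that every sub-cycle already carries at least $\lceil\sqrt{kn}\rceil$ edges of $M_1$ (so only the untouched short cycles need grouping), whereas you allow sub-cycles with as few as two $M_1$-edges and rely entirely on bin-packing; both variants are fine.

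One small arithmetic slip to fix: with $L=\lceil 2\sqrt{kn}\rceil$ you assert $2L\le 4\sqrt{kn}$, which fails whenever $2\sqrt{kn}\notin\mathbb Z$ (for $kn=5$ one gets $2L=10>4\sqrt5$), and your packing bound uses exactly this inequality. The repair is trivial---take $L=\lfloor 2\sqrt{kn}\rfloor$ instead (still $\ge 4$ since $kn\ge 4$, so the ``at least two $M_1$-edges per arc'' check and the estimate $4kn/L\le 2\sqrt{kn}$ both survive), or keep your $L$ and use the next-fit observation that any two consecutive closed bins together hold more than the capacity $5\sqrt{kn}$, giving $|\mathcal C|\le \tfrac{4}{5}\sqrt{kn}+2\le 3\sqrt{kn}$ directly.
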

\begin{proof}[Proof of Lemma \ref{lemmag2}]
Let ${\cal C}_0$ be the collection of all $M_1$-$M_2$-alternating cycles 
formed by the symmetric difference of $M_1$ and $M_2$,
that is, all cycles of the graph $\left(V(K_{2kn}),M_1\Delta M_2\right)$.

Let ${\cal C}_1$ arise from ${\cal C}_0$
be replacing every cycle $C$ in ${\cal C}_0$ of length more than $5\sqrt{kn}$
by several disjoint cycles $C_1,\ldots,C_r$ with 
$V(C)=V(C_1)\cup\ldots\cup V(C_r)$ as follows:
Let $u_1v_1,\ldots,u_rv_r$ be edges from $M_2$ on $C$ such that 
all components of $C-\{ u_1v_1,\ldots,u_rv_r\}$ contain
at least $\left\lceil\sqrt{kn}\right\rceil$ and less than 
$2\left\lceil\sqrt{kn}\right\rceil$ edges from $M_1$.
Note that 
$$r\leq \left\lfloor\frac{m(C)}{2\left\lceil\sqrt{kn}\right\rceil}\right\rfloor,$$
and that the components of $C-\{ u_1v_1,\ldots,u_rv_r\}$
are paths $P_1,\ldots,P_r$.
We may assume that 
$$u_1,v_1,P_1,u_2,v_2,P_2,\ldots,u_r,v_r,P_r$$
is the cyclic order in which the incident vertices and paths appear along $C$,
that is, $P_i$ is a path between $v_i$ and $u_{i+1}$.
Now replace $C$ within ${\cal C}_0$ by the cycles
$$P_1+v_1u_2,
P_2+v_2u_3,
\ldots,
P_r+v_ru_1.$$
Note that each of these cycles is $M_1$-alternating,
and contains less than $2\left\lceil\sqrt{kn}\right\rceil$ edges from $M_1$;
in particular, it has length at most 
$2\left(2\left\lceil\sqrt{kn}\right\rceil-1\right)\leq 5\sqrt{kn}$.

Let 
$$M_2'=M_1\Delta E\left(\bigcup\limits_{C\in {\cal C}_1}C\right).$$
Note that $M_2=M_1\Delta E\left(\bigcup\limits_{C\in {\cal C}_0}C\right).$

By construction, we obtain, 
\begin{eqnarray*}
\|v(M_2)-v(M_2')\|_1
&\leq &|M_2\Delta M_2'| \\
&\leq &\sum\limits_{C\in {\cal C}_0:m(C)>5\sqrt{kn}}2\left\lfloor\frac{m(C)}{2\left\lceil\sqrt{kn}\right\rceil}\right\rfloor\\
&\leq &\sum\limits_{C\in {\cal C}_0}\frac{2m(C)}{2\sqrt{kn}}\\
&\leq &\frac{4kn}{2\sqrt{kn}}\\
&\leq &2\sqrt{kn}
\end{eqnarray*}
In order to obtain ${\cal C}$, 
we group the cycles in ${\cal C}_1$ that have length less than $\sqrt{kn}$;
call these cycles {\it short}.
In fact, there are disjoint subgraphs $H_1,\ldots,H_s$ of $K_{2kn}$
such that 
\begin{itemize}
\item 
each $H_i$ is the disjoint union of short cycles from ${\cal C}_1$,
\item $m(H_i)\leq 4\sqrt{kn}$ for every $i$ in $[s]$,
\item $m(H_i)\geq \sqrt{kn}$ for all but at most one index $i$ in $[s]$, and
\item $H_1\cup \ldots \cup H_s$ 
equals the union of all short cycles in ${\cal C}_1$.
\end{itemize}
Let ${\cal C}$ arise from ${\cal C}_1$ by replacing all short cycles
with the subgraphs $H_1,\ldots,H_s$.
Note
that $M_2'=M_1\Delta E\left(\bigcup\limits_{C\in {\cal C}}C\right)$, 
every subgraph $C$ in ${\cal C}$ is the disjoint union of $M_1$-alternating cycles,
and satisfies $m(C)\leq 5\sqrt{kn}$.
The lower bound on $m(H_i)$ for all but one index $i$ implies that 
$$|{\cal C}|\leq \frac{2nk}{\sqrt{kn}}+1\leq 3\sqrt{kn},$$
which completes the proof.
\end{proof}

\begin{lemma}\label{lemmag3}
If $M_1$, $M_2'$, and ${\cal C}$ are as in Lemma \ref{lemmag2} and $p\in [0,1]$,
then there is a subset ${\cal C}'$ of ${\cal C}$ such that 
$$M=M_1\Delta E\left(\bigcup\limits_{C\in {\cal C}'}C\right)$$
is a perfect matching of $K_{2kn}$ 
with 
$$\Big\| v(M)-\Big(pv(M_1)+(1-p)v(M_2')\Big)\Big\|_1\leq 13k^{\frac{7}{4}}n^{\frac{3}{4}}\sqrt{\ln(2k)}.$$
\end{lemma}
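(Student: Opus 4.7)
The plan is to build $\mathcal{C}'$ by randomized rounding and bound the resulting deviation via Hoeffding's inequality. Independently for each $C \in {\cal C}$, include $C$ in ${\cal C}'$ with probability $1-p$. Because the subgraphs in ${\cal C}$ are pairwise vertex-disjoint and every $C$ is the disjoint union of $M_1$-alternating cycles, toggling any subfamily against $M_1$ still yields a perfect matching of $K_{2kn}$; in particular, the random set $M = M_1 \Delta E\bigl(\bigcup_{C \in {\cal C}'} C\bigr)$ is \emph{always} a perfect matching, regardless of the outcome of the coin flips. The only remaining task is to control the deviation of $v(M)$ from the target vector $p v(M_1) + (1-p) v(M_2')$.

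For $C \in {\cal C}$, let $\delta(C) = v\bigl(M_1 \Delta E(C)\bigr) - v(M_1) \in \mathbb{R}^k$ denote the change in $v$ caused by toggling $C$ alone. Vertex-disjointness of the subgraphs in ${\cal C}$ makes the individual toggles act on disjoint edge sets, so
$$v(M) = v(M_1) + \sum_{C \in {\cal C}'} \delta(C) \qquad \text{and} \qquad v(M_2') = v(M_1) + \sum_{C \in {\cal C}} \delta(C).$$
Taking expectations coordinate by coordinate over the independent inclusion decisions immediately yields $\mathbb{E}[v(M)] = p v(M_1) + (1-p) v(M_2')$, so the randomized $M$ is an unbiased estimator of the target vector; it remains to prove concentration around this mean.

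For concentration, Lemma \ref{lemmag2}(iii),(iv) supply at most $|{\cal C}| \leq 3\sqrt{kn}$ independent summands, and $|\delta(C)_i| \leq m(C)/2 \leq \tfrac{5}{2}\sqrt{kn}$ for every color $i$, since toggling $C$ changes the count of color-$i$ edges in the matching by at most $|M_1 \cap E(C)| = m(C)/2$. Applying Hoeffding's inequality to $\sum_{C \in {\cal C}'} \delta(C)_i$ gives
$$\mathbb{P}\bigl[|v(M)_i - \mathbb{E}[v(M)_i]| > t\bigr] \leq 2\exp\!\left(-\frac{8 t^2}{75\,(kn)^{3/2}}\right)$$
for every $i \in [k]$. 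Choosing $t$ to be a sufficiently large constant multiple of $(kn)^{3/4}\sqrt{\ln(2k)}$ drops the right-hand side below $1/k$, and a union bound over the $k$ coordinates leaves positive probability that every coordinate of $v(M) - \mathbb{E}[v(M)]$ is at most $O\bigl((kn)^{3/4}\sqrt{\ln(2k)}\bigr)$ in absolute value. Summing over the $k$ coordinates converts this into the desired $\ell_1$ bound $13 k^{7/4} n^{3/4}\sqrt{\ln(2k)}$. The principal obstacle is purely constant bookkeeping: the constant in front of $(kn)^{3/4}\sqrt{\ln(2k)}$ in $t$ must be chosen comfortably above $\sqrt{75/8}$, so that after both the union bound over $k$ coordinates and the $\ell_\infty$-to-$\ell_1$ expansion (a further factor of $k$, producing the $k^{7/4}$) the overall constant still fits under $13$.
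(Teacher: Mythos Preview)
Your proof is correct and follows essentially the same approach as the paper: random inclusion of each $C\in\mathcal{C}$, additivity of the toggles to identify the mean as $pv(M_1)+(1-p)v(M_2')$, a Hoeffding-type concentration bound per coordinate using $|\mathcal{C}|\le 3\sqrt{kn}$ and $m(C)\le 5\sqrt{kn}$, and a union bound over the $k$ colors. Your coordinate bound $|\delta(C)_i|\le m(C)/2$ is in fact a factor of two sharper than the paper's $\|x(C)\|_1\le m(C)$, so the constant $13$ is reached with room to spare; otherwise the arguments coincide.
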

\begin{proof}[Proof of Lemma \ref{lemmag3}]
For $C$ in ${\cal C}$, let
$$x(C)=v(M_1\Delta E(C))-v(M_1).$$
By the structure of ${\cal C}$, we have
\begin{itemize}
\item $v(M_2')=v(M_1)+\sum\limits_{C\in {\cal C}}x(C)\mbox{, and}$
\item $\| x(C)\|_1\leq m(C)\leq 5\sqrt{kn}$ for every $C$ in ${\cal C}$.
\end{itemize}
Now, let ${\cal C}'$ be a random subset of ${\cal C}$
containing each element of ${\cal C}$ independently at random with probability $p$.
Clearly, by the structure of ${\cal C}$,
$$M=M_1\Delta E\left(\bigcup\limits_{C\in {\cal C}'}C\right)$$
is a (random) perfect matching of $K_{2kn}$, and
$$v(M)=v(M_1)+\sum\limits_{C\in {\cal C}'}x(C).$$
For every $i$ in $[k]$, the expected value of the $i$th component $v(M)_i$ of $v(M)$
equals exactly the $i$th component $\Big(pv(M_1)+(1-p)v(M_2')\Big)_i$
of $pv(M_1)+(1-p)v(M_2')$.
Furthermore, by the Simple Concentration Bound from \cite{more},
$$
\mathbb{P}\left[
\left|v(M)_i-\Big(pv(M_1)+(1-p)v(M_2')\Big)_i\right|>t
\right]
\leq 2e^{-\frac{t^2}{2\cdot \left(5\sqrt{kn}\right)^2\cdot \left(3\sqrt{kn}\right)}}.
$$
Choosing $t=13(kn)^{\frac{3}{4}}\sqrt{\ln(2k)}$,
the right hand side is less than $\frac{1}{k}$.
Now, the union bound implies that, with positive probability,
$$\left|v(M)_i-\Big(pv(M_1)+(1-p)v(M_2')\Big)_i\right|
\leq 13(kn)^{\frac{3}{4}}\sqrt{\ln(2k)}$$
for each of the $k$ choices of $i$, which implies
$$\left\|v(M)-\Big(pv(M_1)+(1-p)v(M_2')\Big)\right\|_1
\leq 13k^{\frac{7}{4}}n^{\frac{3}{4}}\sqrt{\ln(2k)},$$
completing the proof.
\end{proof}
We are now in a position to complete the proof of Theorem \ref{theoremg1}.

By the theorem of Carath\'{e}odory \cite{cara},
there are $k+1$ (not necessarily distinct) perfect matchings 
$M_1,\ldots,M_{k+1}$ of $K_{2kn}$ 
as well as positive real coefficients
$p_1,\ldots,p_{k+1}$ with $p_1+\cdots+p_{k+1}=1$ such that 
$$p_1v(M_1)+p_2v(M_2)+\cdots+p_{k+1}v(M_{k+1})$$
is the all-zero vector $0$.

We now apply Lemmas \ref{lemmag2} and \ref{lemmag3} exactly $k$ times
in order to reduce the number of vectors 
in the convex combination one by one down to just a single vector $v$
with $v=v(M)$ for some perfect matching $M$ of $K_{2kn}$.
The statement of Theorem \ref{theoremg1} is obtained 
by controling the total deviation/error caused 
by the applications of Lemma \ref{lemmag3}.
Therefore, suppose that 
$$x_i=(p_1+\cdots+p_i)v(M_{[i]})+p_{i+1}v(M_{i+1})+\cdots+p_{k+1}v(M_{k+1})$$
for some $i\in [k]$, where 
$M_{[i]}$ is some perfect matching of $K_{2kn}$ 
that coincides with $M_1$ for $i=1$,
that is, $x_1$ is the all-zero vector.
Let $p=\frac{p_1+\cdots+p_i}{p_1+\cdots+p_i+p_{i+1}}$.
Applying Lemma \ref{lemmag2} to the perfect matchings
$M_{[i]}$
and
$M_{i+1}$
(as $M_1$ and $M_2$ in Lemma \ref{lemmag2})
yields a perfect matching $M_{i+1}'$ 
(corresponding to $M_2'$ in Lemma \ref{lemmag2}).
Now, applying Lemma \ref{lemmag3} 
to the perfect matchings
$M_{[i]}$
and
$M_{i+1}'$
(as $M_1$ and $M_2'$ in Lemma \ref{lemmag3})
and $p$ as above,
yields a perfect matching
$M_{[i+1]}$
(corresponding to $M$ in Lemma \ref{lemmag3})
with 
\begin{eqnarray*}
&&\Big\| (p_1+\cdots+p_{i+1})v(M_{[i+1]})-
\Big(
(p_1+\cdots+p_i)v(M_{[i]})+p_{i+1}v(M_{i+1})
\Big)\Big\|_1 \\
& = & 
\underbrace{(p_1+\cdots+p_{i+1})}_{\leq 1}\Big\| v(M_{[i+1]})-\Big(pv(M_{[i]})+(1-p)v(M_{i+1}')\Big)\Big\|_1\\
&\leq & 13k^{\frac{7}{4}}n^{\frac{3}{4}}\sqrt{\ln(2k)}.
\end{eqnarray*}
Now, if $x_{i+1}$ is defined as
$$x_{i+1}=(p_1+\cdots+p_i+p_{i+1})v(M_{[i+1]})+p_{i+2}v(M_{i+2})+\cdots+p_{k+1}v(M_{k+1}),$$
then
$$\|x_{i+1}-x_i\|_1\leq 13k^{\frac{7}{4}}n^{\frac{3}{4}}\sqrt{\ln(2k)}.$$
This yields a sequence of vectors $x_1,\ldots,x_{k+1}$ in $\mathbb{R}^k$, where 
\begin{itemize}
\item $\|x_1\|_1=0$,
\item $\|x_{i+1}-x_i\|_1\leq 13k^{\frac{7}{4}}n^{\frac{3}{4}}\sqrt{\ln(2k)}$ for every $i$ in $[k]$, and
\item $x_{k+1}=\underbrace{(p_1+\cdots+p_{k+1})}_{=1}v(M_{[k+1]})$ 
for some perfect matching $M_{[k+1]}$ of $K_{2kn}$.
\end{itemize}
Hence,
$$f(M_{[k+1]})=\|v(M_{[k+1]})\|_1\leq \|x_1\|_1+\|x_2-x_1\|_1+\cdots+\|x_{k+1}-x_k\|_1
\leq 13k^{\frac{11}{4}}n^{\frac{3}{4}}\sqrt{\ln(2k)},$$
which completes the proof of Theorem \ref{theoremg1}. $\Box$


\begin{thebibliography}{}
\bibitem{az} K. Azuma, Weighted sums of certain dependent random variables, Tohoku Mathematical Journal 19 (1967) 357-367.
\bibitem{cara} C. Carath\'{e}odory, \"{U}ber den Variabilit\"{a}tsbereich der Koeffizienten von Potenzreihen, die gegebene Werte nicht annehmen, Mathematische Annalen 64 (1907) 95-115.
\bibitem{ca} Y. Caro, Zero-sum problems - a survey, Discrete Mathematics 152 (1996) 93-113.
\bibitem{cahalaza} Y. Caro, A. Hansberg, J. Lauri, and C. Zarb, On zero-sum spanning trees and zero-sum connectivity, arXiv 2007.08240v1. 
\bibitem{ehmora} S. Ehard, E. Mohr, and D. Rautenbach, Low weight perfect matchings, The Electronic Journal of Combinatorics 27 (2020) P4.49.
\bibitem{ersp} P. Erd\H{o}s and J. Spencer, Lopsided Lov\'{a}sz local lemma and latin transversals, Discrete Applied Mathematics 30 (1991) 151-154.
\bibitem{fukascsu} S. Fujita, A. Kaneko, I. Schiermeyer, and K. Suzuki, A rainbow $k$-matching in the complete graph with $r$ colors, The Electronic Journal of Combinatorics 16 (2009) \# R51.
\bibitem{gage} W. Gao and A. Geroldinger, Zero-sum problems in finite abelian groups: A survey, Expositiones Mathematicae 24 (2006) 337-369.
\bibitem{hash} P. Hatami and P.W. Shor, A lower bound for the length of a partial transversal in a Latin square, Journal of Combinatorial Theory, Series A 115 (2008) 1103-1113.
\bibitem{kisi} T. Kittipassorn and P. Sinsap, On the existence of zero-sum perfect matchings of complete graphs, arXiv:2011.00862v1.
\bibitem{koya} A. Kostochka and M. Yancey, Large rainbow matchings in edge-coloured graphs, Combinatorics, Probability and Computing 21 (2012) 255-263.
\bibitem{mopara} E. Mohr, J. Pardey, and D. Rautenbach, Zero-sum copies of spanning forests in zero-sum complete graphs, arXiv:2101.11233v1.
\bibitem{more} M. Molloy and B. Reed, Graph Colouring and the Probabilistic Method, Springer-Verlag Berlin Heidelberg 2002, DOI 10.1007/978-3-642-04016-0.
\bibitem{ry} H.J. Ryser, Neuere Probleme der Kombinatorik, in ``Vortr\"{a}ge \"{u}ber Kombinatorik Oberwolfach'', Mathematisches Forschungsinstitut Oberwolfach, July 1967, 24-29.
\bibitem{st} S.K. Stein, Transversals of Latin squares and their generalizations, Pacific Journal of Mathematics 59 (1975) 567-575.
\bibitem{wofu} D.E. Woolbright and H.-L. Fu, On the existence of rainbows in $1$-factorizations of $K_{2n}$, Journal of Combinatorial Designs 6 (1998) 1-20.
\end{thebibliography}
\end{document}